 \newcommand{\nc}{\newcommand}
\nc{\bb}{\mathfrak{b} }
 \nc{\cc}{\mathfrak{c} }  \nc{\dd}{\mathfrak{d} } 
    \nc{\ggo}{\mathfrak{g} }
 \nc{\hh}{\mathfrak{h} }  \nc{\ii}{\mathfrak{i} }
 \nc{\jj}{\mathfrak{j} }  \nc{\kk}{\mathfrak{k} }
\nc{\mm}{\mathfrak{m} }   \nc{\nn}{\mathfrak{n} }
\nc{\pp}{\mathfrak{p} }   
\nc{\rr}{\mathfrak{r} } \nc{\sg}{\mathfrak{s} }
 \nc{\sso}{\mathfrak{so} }  \nc{\spg}{\mathfrak{sp} }
 \nc{\ssu}{\mathfrak{su} }  \nc{\ssl}{\mathfrak{sl} }
 \nc{\tog}{\mathfrak{t} }  \nc{\uu}{\mathfrak{u} }
 \nc{\vv}{\mathfrak{v} } \nc{\ww}{\mathfrak{w} }
 \nc{\zz}{\mathfrak{z} }
\nc{\CC}{{\mathbb C}}
 \nc{\DD}{{\mathbb D}}
\nc{\FF}{{\mathbb F}}
\nc{\GG}{{\mathbb G}}  
\nc{\HH}{{\mathbb H}}
\nc{\II}{{\mathbb I}}
\nc{\JJ}{{\mathbb J}}
\nc{\KK}{{\mathbb K}}
\nc{\NN}{{\mathbb N}}
\nc{\RR}{{\mathbb R}}  
 \nc{\ZZ}{{\mathbb Z}}  
 \newcommand{\Heis}{\mathrm{H}}
\nc{\ggob}{\overline{\mathfrak{g}}} 
\nc{\glg}{\mathfrak{gl} }
\nc{\pca}{\mathcal{P}} \nc{\nca}{\mathcal{N}}
 \nc{\vp}{\varphi} \nc{\ddt}{\frac{{\rm d}}{{\rm d}t}}
 \nc{\la}{\langle} \nc{\ra}{\rangle}
 \nc{\brg}{[\,,\,]_{\ggo}}
 \nc{\brv}{[\,,\,]_{\vv}}
 \nc{\SO}{{\sf SO}} \nc{\Spe}{{\sf Sp}} \nc{\Sl}{{\sf Sl}}
 \nc{\SU}{{\sf SU}} \nc{\Or}{{\sf O}} \nc{\U}{{\sf U}}
 \nc{\Gl}{{\sf Gl}} \nc{\Se}{{\sf S}} \nc{\Cl}{{\sf Cl}}
 \nc{\Spin}{{\sf Spin}} \nc{\Pin}{{\sf Pin}}
 \nc{\Id}{{\sf Id}} \nc{\Is}{{\sf I}}
 \nc{\Rc}{{\sf Rc}} 
 \nc{\ad}{\operatorname{ad}} \nc{\Ad}{\operatorname{Ad}}
 \nc{\coad}{\operatorname{coad}} 
 \nc{\rank}{\operatorname{rank}} \nc{\Irr}{\operatorname{Irr}}
 \nc{\End}{\operatorname{End}} \nc{\Aut}{\operatorname{Aut}}
 \nc{\Inn}{\operatorname{Inn}} \nc{\Der}{\operatorname{Der}}
 \nc{\Ker}{\operatorname{Ker}} \nc{\Iso}{\operatorname{I}}
 \nc{\Le}{\operatorname{L}} \nc{\tr}{\operatorname{tr}}
 \nc{\dif}{\operatorname{d}} \nc{\sen}{\operatorname{sen}}
 \nc{\modu}{\operatorname{mod}} \nc{\Ric}{\operatorname{R}}
 \nc{\Sym}{\operatorname{Sym}} \nc{\sca}{\operatorname{sc}}
 \nc{\scalar}{{\sf s}} \nc{\grad}{\operatorname{grad}}
 \nc{\ricci}{\operatorname{r}} \nc{\riccin}{\operatorname{Ric}}
 \nc{\Lie}{\operatorname{L}} \nc{\ct}{\operatorname{T}}
\newcommand{\deax}{\partial_x}
\newcommand{\deay}{\partial_y}
\newcommand{\deaz}{\partial_z}
\newcommand{\deat}{\partial_t}
\nc{\mr}{{\mathfrak r}}
\nc{\ms}{{\mathfrak s}}
\nc{\mv}{{\mathfrak v}}
\nc{\lra}{\longrightarrow}
\nc{\R}{{\mathbb R}}
\nc{\Z}{{\mathbb Z}}
 \theoremstyle{plain}
 \newtheorem{thm}{Theorem}[section]
 \newtheorem{prop}[thm]{Proposition}
 \newtheorem{lem}[thm]{Lemma}
 \theoremstyle{definition}
 \theoremstyle{remark}
 \newtheorem{rem}{Remark}
 \newtheorem{exa}[thm]{Example}
 \newcommand{\ri}{{\rm (i)}}
 \newcommand{\rii}{{\rm (ii)}}
\begin{document}
\title[Naturally reductive pseudo-Riemannian Lie groups in low dimensions]
{Naturally reductive pseudo-Riemannian Lie groups in low dimensions}

\author{V. del Barco}
\email{delbarc@fceia.unr.edu.ar}
\author{G. P. Ovando}
\email{gabriela@fceia.unr.edu.ar}
\author{F. Vittone}
\email{vittone@fceia.unr.edu.ar}

\address{V. del Barco, G. P. Ovando, F. Vittone: Depto de Matem\'atica, ECEN-FCEIA, Universidad Nacional de Rosario \\Pellegrini 250, 2000 Rosario, Santa Fe, Argentina.}

\date{\today}

\thanks{V. del Barco and F. Vittone: Universidad Nacional de Rosario. Supported by CONICET fellowship.}
\thanks{G. Ovando: CONICET and Universidad Nacional de Rosario.}

\thanks{Work partially supported by SCyT-U. N. Rosario, ANPCyT, CONICET.}
\begin{abstract} This work concerns  the non-flat metrics on the Heisenberg Lie group of dimension 
three $\Heis_3(\RR)$ and the bi-invariant metrics on the  solvable Lie groups of dimension four. On $\Heis_3(\RR)$ we prove that 
the property of the metric  being  naturally reductive is equivalent to the property of the center  being non-degenerate. These metrics  are Lorentzian algebraic Ricci solitons.  We start with the indecomposable Lie groups of
dimension four  admitting  bi-invariant metrics and which act on $\Heis_3(\RR)$ by isometries and we finally  study some geometrical features on these spaces. 
 
\end{abstract}

\subjclass{ 53C50 53C30 22E25 57S25. }

\keywords{Pseudo-Riemannian spaces, naturally reductive, Lie groups, Heisenberg group}

\date{ today }

\maketitle

\section{Introduction}
Homogeneous manifolds constitute the goal of several modern research in pseudo-Riemannian geometry, 
for instance  Lorentzian spaces for which all null geodesics are homogeneous became relevant in physics \cite{FMP,Me}. This fact motivated several studies on g.o. spaces 
in the last years, see for instance \cite{Ca, CM1, CM2, Du} and its references. In particular  a three-dimensional connected, simply connected, complete homogeneous Lorentzian manifold  is symmetric, or it is isometric to a
three-dimensional Lie group equipped with a left-invariant Lorentzian metric \cite{Ca}.

In the case of the Heisenberg Lie group of dimension three $\Heis_3(\RR)$ it was proved 
in \cite{Ra}  that there are three classes of left-invariant Lorentzian metrics, and only  one of them 
is flat (see also \cite{No}), which is  characterized by the property of the center  being degenerate. 

In this work we concentrate the attention to the other two non-flat metrics on $\Heis_3(\RR)$
 and their isometry groups. According to \cite{Ov2} any left-invariant metric on a
 Heisenberg Lie
group, for which the center is non-degenerate is 
naturally reductive, so these spaces are geodesically 
complete and  non-flat. Here we prove a partial converse to that result: {\em Any naturally reductive 
Lorentzian metric on $\Heis_3(\RR)$  admitting an action by isometric isomorphisms of a
 one-dimensional group,  
 restricts to a metric on the center. }

Thus for any left-invariant Lorentzian metric on  $\Heis_3(\RR)$  the following statements are equivalent:
\begin{itemize} \label{equiv}
\item non-flat metric,
\item non-degenerate center,
\item naturally reductive metric.
\end{itemize}

The first equivalences  follow from Theorem 1 in \cite{Ge}. The statement  above  does not hold in higher dimensions: a flat left-invariant 
Lorentzian metric on  $\RR\times \Heis_3(\RR)$ is proved to be naturally reductive in \cite{Ov3}.
Properties of flat or Ricci-flat Lorentzian metrics were investigated for instance in \cite{ABL, Bo, Ge} and references therein.
Here we also compute the corresponding isometry groups following results on naturally reductive metrics in \cite{Ov2} (comparing with \cite{BR})  and we see that the non-flat metrics are algebraic Ricci solitons (see \cite{BO}).

The study of these  naturally reductive non-flat metrics on $\Heis_3(\RR)$ is motivated by 
 the results on \cite{Ov1}, which state that a  naturally reductive pseudo-Riemannian space admits a transitive action by isometries of a Lie group equipped with a  bi-invariant metric.  Hence we start with the classification of all Lie algebras up to dimension four admitting an ad-invariant metric. It is important to remark that the method used here is constructive an independent of the classification of low dimensional Lie algebras.  

So a naturally reductive Lorentzian metric on  $\Heis_3(\RR)$ admits an action by isometries of a Lie group $G$ with a bi-invariant metric. If $G$ has dimension four, it corresponds to one of the Lie algebras obtained before. This is a key point in the proof of the  equivalence stated above.  

 Finally we complete the work by  investigating the geometry of the bi-invariant metrics of the solvable Lie groups  $G_0$ and $G_1$, which are associated to the non-flat metrics on $\Heis_3(\RR)$. We compute the isometry groups $\Is(G_0)$ and $\Is(G_1)$ in the aim of establishing a relationship between them and $G_0$ and $G_1$ as isometry groups of  $\Heis_3(\RR)$. Also geodesics are described.   

\section{Lie algebras with ad-invariant metrics up to dimension four} 

In this section we revisit the Lie algebras of dimension $d\leq 4$ that can be furnished
 with an ad-invariant metric. The proofs given here are constructive and they do not make use 
of  the double extension procedure \cite{BK,FS,MR}. 

Let $\ggo$ be a real Lie algebra. A symmetric bilinear form $\la\,,\,\ra$  on $\ggo$ is called {\em ad-invariant} if the following condition
 holds:$$ \la \ad_X Y,Z \ra + \la Y,\ad_X Z\ra = 0 \qquad \mbox{ for all } X, Y, Z \in \ggo.$$
 Whenever $\la\,,\,\ra$ is non-degenerate the symmetric bilinear form is just called a {\em metric}.

\begin{exa} The Killing form is an ad-invariant symmetric bilinear form on any Lie algebra $\ggo$, which is non-degenerate if $\ggo$ is semisimple. Moreover if $\ggo$ is simple any ad-invariant metric on $\ggo$ is a non-zero multiple of the Killing form.
\label{example1}
\end{exa}

Recall that the central descending series $\{C^r(\ggo)\}$ and central ascending series
$\{C_r(\ggo)\}$ of a Lie
algebra $\ggo$, are for $r\geq 0$ respectively given by the ideals 
$$\begin{array}{rclrcl}
C^0(\ggo) & = & \ggo & C_0(\ggo) & = & 0\\
C^r(\ggo) & = & [\ggo, C^{r-1}(\ggo)] & \quad C_r(\ggo) & = &\{X\in \ggo:[X, \ggo]\subseteq C_{r-1}(\ggo)\}.
\end{array}
$$

Fixing a subspace 
$\mm$ of $\ggo$, its orthogonal
subspace
 is defined as usual by
$$\mm^{\perp}=\{ X\in \ggo : \la X, Y\ra=0,\; \forall\; Y \in \mm\}.$$

The next result follows by applying the definitions above and an 
inductive procedure. 

\begin{lem} Let $(\ggo, \la\,,\,\ra)$ denote a Lie algebra endowed with an
ad-invariant metric. 

\begin{enumerate}

\item If $\hh$ is an ideal in $\ggo$ then $\hh^{\perp}$ is also an ideal of
$\ggo$. 

\item $C^r(\ggo)=(C_r(\ggo))^{\bot}$ for all $r\geq 0$.

\end{enumerate}
\label{lema3.1}
\end{lem}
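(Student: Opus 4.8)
The plan is to treat the two parts separately, with part (1) a direct consequence of ad-invariance and part (2) an induction on $r$ that feeds a perped version of itself back in through the identity $\mm^{\perp\perp}=\mm$, valid because $\la\,,\,\ra$ is non-degenerate and $\ggo$ is finite-dimensional.

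For part (1), I would let $\hh$ be an ideal and take $Y\in\hh^\perp$ and $X\in\ggo$; the goal is $[X,Y]\in\hh^\perp$. For any $Z\in\hh$, ad-invariance gives $\la [X,Y],Z\ra = \la \ad_X Y, Z\ra = -\la Y, \ad_X Z\ra = -\la Y,[X,Z]\ra$. Since $\hh$ is an ideal, $[X,Z]\in\hh$, and $Y\in\hh^\perp$, so this vanishes; hence $[X,Y]$ is orthogonal to all of $\hh$, that is $[X,Y]\in\hh^\perp$.

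For part (2) I would argue by induction on $r$, and it is cleaner to prove the equivalent statement $C_r(\ggo)=(C^r(\ggo))^\perp$ and then take orthogonals. The base case $r=0$ reads $(C^0(\ggo))^\perp=\ggo^\perp=0=C_0(\ggo)$, using non-degeneracy. For the inductive step, suppose $C^{r-1}(\ggo)=(C_{r-1}(\ggo))^\perp$, equivalently $(C^{r-1}(\ggo))^\perp=C_{r-1}(\ggo)$. Now $X\in (C^r(\ggo))^\perp=([\ggo,C^{r-1}(\ggo)])^\perp$ means $\la X,[U,V]\ra=0$ for all $U\in\ggo$ and $V\in C^{r-1}(\ggo)$. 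Moving the bracket across by ad-invariance, $\la X,[U,V]\ra=\la [X,U],V\ra$, so this condition is equivalent to $[X,U]\in (C^{r-1}(\ggo))^\perp=C_{r-1}(\ggo)$ for every $U\in\ggo$, i.e. $[X,\ggo]\subseteq C_{r-1}(\ggo)$, which is exactly $X\in C_r(\ggo)$ by definition. Thus $(C^r(\ggo))^\perp=C_r(\ggo)$, and taking orthogonals once more yields $C^r(\ggo)=(C_r(\ggo))^\perp$.

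The individual computations are routine; the only point requiring care is the repeated passage $\mm\leftrightarrow\mm^\perp$, which needs $\la\,,\,\ra$ to be a \emph{metric} (non-degenerate) so that $\dim\mm+\dim\mm^\perp=\dim\ggo$ and $\mm^{\perp\perp}=\mm$. This is precisely where non-degeneracy, and not merely ad-invariance, is used, and it is what lets the inductive hypothesis on $C^{r-1}(\ggo)$ be applied in the perped form $(C^{r-1}(\ggo))^\perp=C_{r-1}(\ggo)$ — the main (if mild) obstacle being to keep this bookkeeping consistent across the induction.
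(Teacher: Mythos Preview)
Your proof is correct and follows precisely the route the paper indicates: the paper does not spell out a proof but merely states that the lemma ``follows by applying the definitions above and an inductive procedure,'' which is exactly what you carry out --- a one-line ad-invariance computation for (1) and an induction on $r$ using $\mm^{\perp\perp}=\mm$ for (2).
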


 Notice that if the metric is indefinite, for any subspace $\mm$ the
decomposition $\mm + \mm^{\perp}$ is not necessarily a direct sum. Nevertheless, the next formula holds
\begin{equation}\label{sumco}\dim \ggo = \dim C^r(\ggo)+\dim C_r(\ggo) \quad \forall r\geq 0
\end{equation}
and in particular 
\begin{equation}\label{eq:sumadim}
\dim \ggo =\dim C^1(\ggo) +\dim \zz(\ggo)\end{equation}
 where $\zz(\ggo)$ denotes the center of $\ggo$. Moreover  
 \begin{itemize}
 \item if
 $\mm\subseteq C^1(\ggo)$ is a vector   subspace such that 
 $C^1(\ggo)=(\zz(\ggo)\cap C^1(\ggo))\oplus
 \mm$, then $\mm$ is non-degenerate;
 
 \item if $\mm'\subseteq \zz(\ggo)$ is a vector subspace such that
 $\zz(\ggo)=(\zz(\ggo)\cap C^1(\ggo))\oplus \mm'$, then $\mm'$ is
 non-degenerate.
 \end{itemize}

 \begin{rem} 
 Suppose $\ggo$ admits an ad-invariant metric and $\zz(\ggo)\neq 0$. Then as said above any complementary space $\tilde{\zz}$ such that 
 $\zz(\ggo)=\tilde{\zz}\oplus\left(\zz(\ggo)\cap C^{1}(\ggo)\right)$ is non-degenerate. It follows that  $\ggo=\tilde{\zz}\oplus \tilde{\ggo}$ as a direct sum of non-degenerate ideals where $\tilde{\ggo}=\tilde{\zz}^{\perp}$ each of them having ad-invariant metrics. In addition $\zz(\tilde{\ggo})=\zz(\ggo)\cap C^{1}(\ggo)$. 
 
 Now suppose $\ggo$ is solvable. Then by (\ref{eq:sumadim}) it has non-trivial center. If moreover $\ggo$ is nonabelian then both $C^{1}(\ggo)$ and $\zz(\ggo)$ are non-trivial and $C^1(\ggo)\cap\zz(\ggo)\neq 0$. In fact using the decomposition described above $\ggo=\tilde{\zz}\oplus \tilde{\ggo}$ where $\tilde{\ggo}$ turns to be a solvable Lie algebra with an ad-invariant metric. Then its center $\zz(\tilde{\ggo})=\zz(\ggo)\cap C^{1}(\ggo)$ is not trivial. 
\label{rem3}
 \end{rem}

 \begin{prop} Let $\ggo$ denote a real Lie algebra of dimension two or three. If it can be endowed with an ad-invariant metric, then 
 \begin{itemize}
 \item in dimension two $\ggo$ is  abelian and
 \item in dimension three $\ggo$ is abelian or simple.
 \end{itemize}
 \label{prop1}
 \end{prop}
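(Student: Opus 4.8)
The plan is to route everything through the dimension identity~(\ref{eq:sumadim}), $\dim\ggo=\dim C^1(\ggo)+\dim\zz(\ggo)$, which already encodes the existence of the ad-invariant metric, and then to run a short case analysis on $\dim\zz(\ggo)$. The point is that once a value of $\dim\zz(\ggo)$ is fixed, (\ref{eq:sumadim}) pins down $\dim C^1(\ggo)$, and this can be checked against what the bracket can actually produce.

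For dimension two the quickest route is Remark~\ref{rem3}: if $\ggo$ were nonabelian it would be a nonabelian solvable Lie algebra carrying an ad-invariant metric (every two-dimensional Lie algebra is solvable), so $C^1(\ggo)\cap\zz(\ggo)\neq 0$ and in particular $\zz(\ggo)\neq 0$. But a two-dimensional Lie algebra with a nonzero central element $Z$ is abelian, since for any complement $W$ one has $[\ggo,\ggo]=\langle[Z,W]\rangle=0$. This contradiction forces $\ggo$ to be abelian.

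For dimension three I would split according to $\dim\zz(\ggo)\in\{0,1,2,3\}$. The value $3$ is precisely the abelian case. The values $1$ and $2$ I expect to eliminate directly: a three-dimensional Lie algebra cannot have a two-dimensional center (two central vectors plus any complement force all brackets to vanish, so the algebra would be abelian with three-dimensional center); and if $\dim\zz(\ggo)=1$, writing $\ggo=\langle Z\rangle\oplus\langle V,W\rangle$ with $Z$ central gives $C^1(\ggo)=\langle[V,W]\rangle$, hence $\dim C^1(\ggo)\le 1$, contradicting the value $\dim C^1(\ggo)=2$ forced by (\ref{eq:sumadim}). This leaves only $\dim\zz(\ggo)=0$.

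The case $\dim\zz(\ggo)=0$ is where the real work lies. Here (\ref{eq:sumadim}) yields $C^1(\ggo)=\ggo$, i.e. $\ggo$ is perfect, and I must upgrade this to simplicity by showing $\ggo$ has no proper nonzero ideal. Suppose $\hh$ is one; by Lemma~\ref{lema3.1}(1), $\hh^{\perp}$ is also an ideal, and non-degeneracy gives $\dim\hh+\dim\hh^{\perp}=3$, so one of $\hh,\hh^{\perp}$ is one-dimensional. Thus it suffices to exclude one-dimensional ideals. For a one-dimensional ideal $\langle H\rangle$ the bracket defines a linear functional $\mu$ via $[X,H]=\mu(X)H$; since $\ad$ is a representation, $\mu$ vanishes on $[\ggo,\ggo]=\ggo$, so $\mu\equiv 0$ and $H$ is central, contradicting $\zz(\ggo)=0$. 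Hence $\ggo$ is simple. I anticipate this last case—converting "perfect with trivial center and an ad-invariant metric" into "simple"—to be the main obstacle, with the leverage of Lemma~\ref{lema3.1}(1), which lets one trade a two-dimensional ideal for its one-dimensional orthogonal, being the crucial step.
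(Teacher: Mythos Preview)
Your argument is correct, but it follows a different path from the paper's. The paper invokes the low-dimensional classification up front: in dimension two it names the unique nonabelian algebra $[X,Y]=Y$ and notes its center is trivial, which already contradicts~(\ref{eq:sumadim}); in dimension three it uses the solvable/simple dichotomy, and for the nonabelian solvable case picks $Z\in C^1(\ggo)\cap\zz(\ggo)$ (Remark~\ref{rem3}), extends to a basis $X,Y,Z$ with $[X,Y]=Z$, and checks directly that $Z$ is orthogonal to every basis vector, so any ad-invariant form is degenerate. Your route instead stays entirely inside the dimension identity~(\ref{eq:sumadim}) and a case split on $\dim\zz(\ggo)$, and in the centerless case upgrades ``perfect'' to ``simple'' via Lemma~\ref{lema3.1}(1) (trading a two-dimensional ideal for its one-dimensional orthogonal) together with the observation that a one-dimensional ideal of a perfect algebra must be central. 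The payoff of your approach is self-containment---you never appeal to the classification of three-dimensional Lie algebras---and the orthogonal-ideal trick is a clean, reusable mechanism; the paper's approach is more hands-on and avoids Lemma~\ref{lema3.1}(1) altogether, reaching degeneracy by a two-line computation once the dichotomy is granted.
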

 \begin{proof}
   Assume first that $\ggo$ has dimension two. Then it is either  abelian or isomorphic to the solvable  Lie algebra spanned by the vectors $X,Y$ with $[X,Y]=Y$. Since the center of this solvable Lie algebra is trivial,  it cannot be equipped with an ad-invariant metric.
 
 Assume now that $\ggo$ has dimension 3. It is well known that it must be either
 solvable or simple. If it is abelian or simple, it admits an ad-invariant metric
 (see Example \ref{example1}). 
 
 Suppose now $\ggo$ is a non-abelian solvable Lie algebra equipped with an ad-invariant bilinear form $\la \,,\,\ra$. Since $\mathfrak{z}(\ggo)\cap C^{1}(\ggo)$ is non-trivial  (see Remark \ref{rem3}), there exist 
  $X,Y\in \ggo$ such that 
  $[X,Y]=Z\in C^1(\ggo) \cap \zz(\ggo)$. 
  It is not difficult to see that the vectors $X,Y,Z$  form a basis of $\ggo$.  Since $Z\in C^{1}(\ggo)\cap \left(C^{1}(\ggo)\right)^{\bot}$ then $\la Z, Z\ra=0$. Furthermore, 
 $$\la Z, X\ra = \la [X,Y], X\ra =-\la Y,[X,X]\ra=0$$
 and in the same way one gets $\la Z, Y\ra=0$. Thus any ad-invariant bilinear form on $\ggo$ must be degenerate.
 \end{proof}
 
  A Lie algebra $(\ggo, \la \,,\,\ra)$ is called {\em indecomposable} if it has no non-degenerate ideals. 
 
 Observe that if a Lie algebra $\ggo$ with an ad-invariant metric  admits a non-degenerate ideal
 $\mathfrak j$, then $\mathfrak j^{\perp}$ is also a non-degenerate ideal and so $\ggo=\mathfrak j \oplus \mathfrak j^{\perp}$. 
 
 \begin{rem} By  Remark \ref{rem3} if $(\ggo, \la \,,\,\ra)$ is indecomposable and with non-trivial center, then  the center is contained in the commutator $\zz(\ggo)\subseteq C^1(\ggo)$. \label{rem2}
 \end{rem}
 
 \begin{lem} \label{led} Let $\ggo$ denote a  Lie algebra of dimension four furnished with an ad-invariant metric. If it is non-solvable then it is decomposable.
\label{solu}
\end{lem}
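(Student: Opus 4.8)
The plan is to exploit the Levi decomposition together with part (2) of Lemma \ref{lema3.1}. Since $\ggo$ is non-solvable and four-dimensional, I would first argue that its Levi decomposition must take the shape $\ggo=\sem\oplus\rad$, where $\sem$ is a \emph{simple} Lie algebra of dimension three and $\rad$ is the one-dimensional radical. Indeed, a non-trivial semisimple Levi factor has dimension at least three, and since no semisimple Lie algebra has dimension four, the factor cannot exhaust all of $\ggo$; hence $\dim\sem=3$ and $\dim\rad=1$.

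Next I would pin down the center. Because $\rad$ is one-dimensional it is abelian, and the action of the perfect algebra $\sem$ on the line $\rad$ is a homomorphism into the abelian $\glg(\rad)$, hence trivial; thus $[\sem,\rad]=0$ and $\rad$ is central. Since conversely every abelian ideal lies in the radical, I obtain $\zz(\ggo)=\rad=\RR Z$ for some $Z\neq 0$. Simplicity of $\sem$ gives $[\sem,\sem]=\sem$, so $C^1(\ggo)=[\ggo,\ggo]=\sem$ while $C_1(\ggo)=\zz(\ggo)=\RR Z$.

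The decisive step is then to show that the line $\RR Z$ is non-degenerate. Applying Lemma \ref{lema3.1}(2) with $r=1$ yields $\sem=C^1(\ggo)=(C_1(\ggo))^{\perp}=(\RR Z)^{\perp}=Z^{\perp}$. Now $Z\in\rad$ and $\rad\cap\sem=0$, so $Z\notin\sem=Z^{\perp}$, which forces $\la Z,Z\ra\neq 0$. Hence $\RR Z$ is a non-degenerate, proper, nonzero (central) ideal, and by the observation preceding Remark \ref{rem2} we conclude that $\ggo=\RR Z\oplus(\RR Z)^{\perp}$ is a direct sum of non-degenerate ideals; in particular $\ggo$ is decomposable.

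I expect the main obstacle to be conceptual rather than computational: recognizing that the non-degeneracy of the center need not be assumed but falls out for free from the identity $\sem=Z^{\perp}$ — the point being that a null $Z$ would be forced to lie in its own orthogonal complement $\sem$, contradicting the Levi splitting $\rad\cap\sem=0$. Everything else (the precise shape of the Levi decomposition in dimension four, and the triviality of a one-dimensional representation of a simple algebra) is standard structure theory.
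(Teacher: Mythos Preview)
Your proof is correct and follows the same overall strategy as the paper---take the Levi decomposition $\ggo=\sem\oplus\rad$ with $\dim\sem=3$, $\dim\rad=1$, and show that the one-dimensional radical is a non-degenerate ideal---but your execution is more conceptual at the two key steps. To see that $\sem$ acts trivially on $\rad$, the paper writes out the structure constants of $\ssl(2)$ and $\sso(3)$ and checks the Jacobi-type relations by hand, whereas you simply observe that a perfect algebra has no non-trivial one-dimensional representation. To see that $\la Z,Z\ra\neq 0$, the paper computes $\la e_0,e_k\ra=0$ directly from ad-invariance and then appeals to non-degeneracy, whereas you invoke Lemma~\ref{lema3.1}(2) to identify $Z^{\perp}=C^1(\ggo)=\sem$ and conclude from $Z\notin\sem$. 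Both routes reach the same endpoint; yours has the advantage of recycling the earlier orthogonality lemma instead of repeating an ad-invariance computation, while the paper's explicit calculation has the virtue of being self-contained and not depending on having already identified $C^1(\ggo)$ and $\zz(\ggo)$ exactly.
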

 
 \begin{proof}
 Let $ \ggo=\mr \oplus \ms$ be a Levi decomposition of $\ggo$, where $\mr$ denotes the radical. Since $\ggo$ is not solvable $\dim \mr <4$. Moreover since there are no simple Lie algebras of dimension one or two, it holds $\dim \mr=1$ and $\ms$ is either $\mathfrak{sl}(2)$ or $\mathfrak{so}(3)$. In every case the action $\ms \to Der(\mr)$ is trivial. In fact let $\mr=\R e_0$ and $\ms=span\{e_1,e_2,e_3\}$. 
 
 Assume $[e_i, e_0]=\lambda_i e_0$. For all $i,j=1,2,3$ there exist $\xi_{ij}\in \R-\{0\}$ such that $[e_i, e_j]=\xi_{ij} e_k$ for some $k=1,2,3$ (see the Lie brackets in $\mathfrak{sl}(2)$ or $\mathfrak{so}(3)$) and where $\xi_{ij}\neq 0$ for all $i,j$. Since $[\ms, \ms]=\ms$  from $\ad([e_i, e_j])e_0=\xi_{ij} \ad(e_k)e_0$ one gets $\lambda_k=0$ for all $k$. 
 
  Let $\la\,,\,\ra$ denote an ad-invariant metric on $\ggo$ and denote $\mu_k=\la e_0, e_k\ra$. So 
  $$\xi_{ij} \mu_k = \xi_{ij}\la e_0, e_k\ra=\la e_0,[e_i, e_j]\ra = \la [e_j, e_0], e_i\ra=0$$
  and since $\xi_{ij}\neq 0$ it must holds $\mu_k=0$ for all $k$.
  Hence since $\la \,,\,\ra$ is non-degenerate, it follows $\la e_0, e_0\ra\neq 0$, so that $\mr$ is a non-degenerate ideal and  the proof is finished.
   \end{proof}

To complete the description of all the Lie algebras of dimension four admitting ad-invariant metrics we have the following result.
 
 \begin{prop}\label{prodim4} Let $\ggo$ denote a real Lie algebra of dimension four which
 can be endowed with an ad-invariant metric. Then
 $\ggo=span\{e_0,e_1,e_2,e_3\}$ is isomorphic to one of the following Lie algebras: 
 \begin{itemize}
 \item $\RR^{4}$
 \item $\RR\oplus \mathfrak{sl}(2,\RR)$
 \item $\RR \oplus \sso(3,\RR)$
 \item the oscillator Lie algebra $\ggo_0=span\{e_{0},\cdots e_{3}\}$ with the non-zero Lie brackets: 
\begin{equation}
[e_0 , e_1 ] = e_2 \quad [e_0 , e_2 ] = -e_1 \quad [e_1 , e_2 ] = e_3
\label{lbg0}
\end{equation}
 \item  $\ggo_1=span\{e_{0},\cdots,e_{3}\}$ with the non-zero Lie brackets: 
  \begin{equation} [e_0 , e_1 ] = e_1 \quad [e_0 , e_2 ] = -e_2 \quad [e_1 , e_2 ] =
 e_3.
 \label{lbg1}
 \end{equation}
 \end{itemize}
 \end{prop}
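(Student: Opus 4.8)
The plan is to split according to whether $\ggo$ is solvable, and to reduce the only genuinely new case to the classification of a single skew-symmetric operator on a two-dimensional metric space.

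First, if $\ggo$ is not solvable, Lemma \ref{solu} already gives that $\ggo$ is decomposable; more precisely its proof exhibits the radical as a one-dimensional non-degenerate ideal $\mr=\RR e_0$ with simple complement $\ms\cong\mathfrak{sl}(2,\RR)$ or $\sso(3,\RR)$, whence $\ggo\cong\RR\oplus\mathfrak{sl}(2,\RR)$ or $\RR\oplus\sso(3,\RR)$. If $\ggo$ is abelian it is $\RR^{4}$. The remaining case is $\ggo$ solvable and nonabelian, and here I would first argue it must be indecomposable: a proper non-degenerate ideal $\jj$ would split $\ggo=\jj\oplus\jj^{\perp}$ into solvable metric ideals of dimension $\le 3$, each abelian by Proposition \ref{prop1}, forcing $\ggo$ abelian, a contradiction.

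So assume $\ggo$ is solvable, nonabelian and indecomposable. By Remark \ref{rem3} its center is non-trivial, and by Remark \ref{rem2} one has $\zz(\ggo)\subseteq C^{1}(\ggo)$; combined with $\dim\ggo=\dim C^{1}(\ggo)+\dim\zz(\ggo)$ from (\ref{eq:sumadim}) this gives $\dim\zz(\ggo)\le 2$. The value $\dim\zz(\ggo)=2$ would force $C^{1}(\ggo)=\zz(\ggo)$, i.e. $\ggo$ two-step nilpotent with two-dimensional commutator equal to the center; but then the bracket factors through $\Lambda^{2}(\ggo/\zz(\ggo))$, which is one-dimensional, so $\dim C^{1}(\ggo)\le 1$, a contradiction. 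Hence $\dim\zz(\ggo)=1$ and $\dim C^{1}(\ggo)=3$. Writing $\zz(\ggo)=\RR Z$, Lemma \ref{lema3.1}(2) identifies $C^{1}(\ggo)=\zz(\ggo)^{\perp}$, so $Z$ is isotropic. I would then choose $W$ with $\la Z,W\ra=1$ (so $W\notin C^{1}(\ggo)$) and, correcting by a multiple of $Z$, make $\RR Z\oplus\RR W$ a non-degenerate hyperbolic plane whose orthogonal complement $\vv$ is a non-degenerate two-dimensional subspace with $C^{1}(\ggo)=\RR Z\oplus\vv$.

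Since $Z$ is central, $\ad_{W}$ maps $\vv$ into $C^{1}(\ggo)$, so $\ad_{W}V=DV+\alpha(V)Z$ for some linear $D\colon\vv\to\vv$ and $\alpha\in\vv^{*}$; ad-invariance with $X=W$ shows $D$ is skew-symmetric for $\la\,,\,\ra|_{\vv}$, and a second application gives $[\vv,\vv]\subseteq\RR Z$, say $[V_{1},V_{2}]=aZ$. The crucial point is a dimension count: for $\dim C^{1}(\ggo)=3$ the vectors $aZ$ and $D(\vv)$ must exhaust $\RR Z\oplus\vv$, which forces $a\neq 0$ and $D$ invertible; using $a\neq 0$ I can replace $W$ by $W+v$ with $v\in\vv$ to annihilate $\alpha$ without changing $D$, reaching $[W,V_{i}]=DV_{i}$, $[V_{1},V_{2}]=aZ$, $Z$ central. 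Finally I classify the invertible skew operator $D$ on the two-dimensional metric space $(\vv,\la\,,\,\ra|_{\vv})$: in two dimensions the form is either definite or split, and this dichotomy separates the two algebras. If $\la\,,\,\ra|_{\vv}$ is definite then $D$ is a nonzero rotation generator with $D^{2}$ a negative multiple of the identity, and a basis of $\vv$ adapted to the form together with a rescaling of $W,Z,V_{i}$ yields the oscillator brackets (\ref{lbg0}), giving $\ggo_{0}$; if the form is split then $D$ is a hyperbolic generator with $D^{2}$ a positive multiple of the identity, diagonalizable over $\RR$, and the same normalization yields (\ref{lbg1}), giving $\ggo_{1}$. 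I expect the main obstacle to be organizing this last step cleanly: one must simultaneously justify the dimension count that forces $a\neq 0$ and $D$ invertible, ruling out spurious lower-dimensional commutators, and verify that the two conjugacy classes of $D$, after rescaling, reproduce exactly the two listed sets of structure constants rather than a continuum of non-isomorphic algebras.
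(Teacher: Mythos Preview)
Your argument is correct and follows essentially the same route as the paper: reduce to the indecomposable solvable case, pin down $\dim\zz(\ggo)=1$ and $\dim C^{1}(\ggo)=3$, pick a hyperbolic pair $(W,Z)$ with non-degenerate orthogonal complement $\vv\subset C^{1}(\ggo)$, and classify the invertible skew operator $\ad(W)|_{\vv}$ according to whether the induced form on $\vv$ is definite or split. One simplification the paper uses that you can adopt: ad-invariance applied with $X=W$ and one argument equal to $W$ gives $\la[W,V],W\ra=-\la V,[W,W]\ra=0$, and since $\la Z,W\ra=1$ this forces your $\alpha$ to vanish outright (equivalently $\ad(W)\vv\subseteq\vv$), so the replacement $W\mapsto W+v$ is unnecessary.
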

 
\begin{proof} Let $\ggo$ be a Lie algebra equipped with an ad-invariant metric
$\la\,,\,\ra$.  
If $\ggo$ is decomposable then $\ggo$ corresponds to one of the following Lie algebras: $\RR^{4}$, $\ggo=\RR\oplus \mathfrak{sl}(2,\RR)$, $\ggo=\RR \oplus \sso(3,\RR)$ (by Proposition \ref{prop1}). 

 Assume now $\ggo$ is indecomposable. From Lemma \ref{led} the Lie algebra $\ggo$ is solvable and hence $C^1(\ggo)\neq \ggo$. By Remark \ref{rem2},
$\zz(\ggo)\subseteq C^1(\ggo)$ and $4=\dim \zz(\ggo) +\dim C^1(\ggo) \leq 2 \dim C^1(\ggo)$. It
follows that $\dim \zz(\ggo)=1$ or $\dim\zz(\ggo)=2$. But since we cannot have $\zz(\ggo)=C^{1}(\ggo)$ 
(in dimension four), it should be $\dim\zz(\ggo)=1$ and  $\dim C^1(\ggo)=3$.

Let $e_3$ be a generator of $\zz(\ggo)$ and let
$e_0\in \ggo - C^1(\ggo)$ such that $\la e_0, e_3\ra=1$. 
Denote by $\mm =span\{e_0,e_3\}^{\perp}$. Then $\mm\subseteq \zz(\ggo)^{\bot}=C^1(\ggo)$, $\mm$ is non-degenerate and it is not 
difficult to see that $C^{1}(\ggo)=\zz(\ggo)\oplus\mm$. Then there exists a basis $\{e_1,e_2\}$
of $\mm$ such that the matrix of the metric in this basis takes one of the
following forms
$$B^0=\left( \begin{matrix}
1 & 0 \\ 0 & 1 \end{matrix} \right) \qquad \qquad 
B^{1,1}= \left( \begin{matrix}
0 & 1 \\ 1 & 0 \end{matrix} \right) \qquad \qquad
-B^0=\left( \begin{matrix}
-1 & 0 \\ 0 & -1 \end{matrix} \right).
$$
Thus $C^1(\ggo)=span\{e_1, e_2, e_3\}$ and $e_0$ acts on $C^1(\ggo)$ by the
adjoint action. Due to the ad-invariance property of $\la \,,\,\ra$ it follows that $\ad(e_0)\mm \subseteq \mm$. 

Assume that $\mm$ has the metric given by $B^0$, hence $\ad(e_0) \in \sso(2)$ for $B^0$, implying that
\begin{equation}\label{action1}
 \ad(e_0)=\left( \begin{matrix}
0 & -\lambda \\ \lambda & 0 \end{matrix} \right)
\end{equation}
for some $\lambda\neq 0$. In the case that the metric is given by $-B^0$ the same matrix is obtained for
 $\ad(e_0)$.
Similarly $\ad(e_0) \in \sso(1,1)$ for $B^{1,1}$, implying that
\begin{equation}\label{action2}\ad(e_0)=\left( \begin{matrix}
\lambda & 0 \\ 0 & -\lambda \end{matrix} \right)
\end{equation}
for some $\lambda\neq 0$.

In either case, since $\la [e_0, e_1], e_2\ra= \la e_0, [e_1, e_2]\ra$ one gets
 that $[e_1, e_2]=\lambda e_3$. 

In the basis $\{\frac1{\lambda}e_0, e_1, e_2, \lambda e_3\}$ the action of 
$\ad(\frac1{\lambda}e_0)$ on $\mm$ is as in (\ref{action1}) taking $\lambda=1$ while the metric obeys the rules 
\begin{equation}\label{metric0}
 1 = \la \frac1{\lambda} e_0, \lambda e_3 \ra = \la e_1, e_1 \ra= \la e_2, e_2 \ra \qquad \la e_0, e_0\ra=\mu \in \RR
\end{equation}
and this is for $\ggo_0$. In fact, in this basis the relations of (\ref{lbg0}) are verified.

In the other case a similar reasoning gives the results of the statement, that is, one gets 
the basis $\{e_1, e_2, e_3\}$ for the action (\ref{action2}) and proceeding as above one gets the Lie algebra $\ggo_1$ together with the ad-invariant metric given by:
\begin{equation}\label{metric1}
 1 = \la \frac1{\lambda} e_0, \lambda e_3 \ra = \la e_1, e_2 \ra \qquad \la e_0, e_0\ra=\mu \in \RR.
\end{equation}
\end{proof}

\begin{rem} The ad-invariant metric on the Lie algebra $\ggo_0$ (resp. $\ggo_1$)   can be taken with $\mu=0$. In fact it suffices to change $e_0$ by $\sqrt{\frac{2}{\mu}}e_0-e_3$ whenever $\mu> 0$  and  by $\sqrt{\frac{2}{-\mu}}e_0+e_3$ if $\mu<0$. This gives the following matrices for the ad-invariant metrics
\begin{equation}\label{gmatrix0}
\ggo_0: \quad \left(\begin{matrix}
0 & 0 & 0  & 1\\
0& 1 & 0 & 0\\
0& 0 & 1& 0\\
1& 0& 0 & 0
\end{matrix}\right)  
 \qquad \qquad \ggo_1: \quad\left(\begin{matrix}
0 & 0 & 0  & 1\\
0& 0 & 1 & 0\\
0& 1& 0& 0\\
1& 0& 0 & 0
\end{matrix}\right)
\end{equation}
which will be used from now on. 
\end{rem}

\section{Naturally reductive metrics  on the Heisenberg Lie group}
Let $G$ denote a Lie group with Lie algebra $\ggo$ and let $H< G$ be  a closed Lie subgroup of $G$ whose Lie algebra is denoted by $\hh$. 
A homogeneous pseudo-Riemannian manifold $(M = G/H, \la\,,\,\ra)$ is said to be {\em naturally reductive} if it is reductive, i.e. there is a reductive decomposition
$$\ggo = \hh \oplus \mm \qquad \mbox{ with } \qquad \Ad(H)\mm \subseteq \mm$$
and
$$\la [x, y]_{\mm}, z\ra + \la y, [x, z]_{\mm}\ra = 0 \qquad \mbox{ for all } \quad x, y, z \in \mm.$$

We shall say that a metric on $M$ is naturally reductive if the conditions above are satisfied for some pair $(G,H)$. If $M$ is naturally reductive the geodesics passing through the point $o\in M$ are 
$$\gamma(t)= \exp tx \cdot o \qquad \mbox{ for some } x\in \mm,$$
which implies that these spaces are geodesically complete.
 For the Heisenberg Lie group of dimension 2n+1, $\Heis_{2n+1}(\RR)$,  one has the next result.

\medskip

{\bf Theorem} \cite{Ov2} {\em If $\Heis_{2n+1}(\RR)$ is endowed with a left-invariant pseudo-Rie\-mannian metric for which the center is non-degenerate, then this metric is naturally reductive.  }

\medskip

Our aim here is to characterize the Lorentzian naturally reductive  metrics on the Heisenberg Lie 
group of dimension three. We shall prove a converse of the result above.

\begin{thm} \label{tfin} If $\Heis_3(\RR)$ is endowed with a naturally reductive pseudo-Rieman\-nian left-invariant  metric with pair $(G, \RR)$ where $G$  has 
dimension four and $\RR< G$ acts by isometric automorphisms on $\Heis_3(\RR)$, then the center of $\Heis_3(\RR)$ is non-degenerate.
\end{thm}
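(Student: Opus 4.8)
The plan is to combine the classification just obtained (Proposition \ref{prodim4}) with the hypothesis that the isotropy acts by automorphisms, thereby reducing the whole question to a short linear-algebra computation on a Heisenberg ideal of $\ggo$. Since the space is naturally reductive, by \cite{Ov1} it admits a transitive group of isometries carrying a bi-invariant metric; taking this to be our four-dimensional $G$, the Lie algebra $\ggo$ carries an ad-invariant metric, so Proposition \ref{prodim4} forces $\ggo$ to be one of $\RR^4$, $\RR\oplus\mathfrak{sl}(2,\RR)$, $\RR\oplus\sso(3,\RR)$, $\ggo_0$ or $\ggo_1$. The group $N\cong\Heis_3(\RR)$ of left translations embeds in $G$ as a simply transitive subgroup, and the hypothesis that $\RR=H$ acts by automorphisms means $\RR$ normalizes $N$; hence $\nn=\mathrm{Lie}(N)$ is a codimension-one ideal with $\ggo=\nn\oplus\hh$. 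In particular $\ggo$ is solvable and non-abelian, which excludes $\RR^4$ (abelian) and the two non-solvable algebras $\RR\oplus\mathfrak{sl}(2,\RR)$, $\RR\oplus\sso(3,\RR)$, leaving $\ggo\in\{\ggo_0,\ggo_1\}$, and in both cases $\nn=C^1(\ggo)=\mathrm{span}\{e_1,e_2,e_3\}$ with center $\RR e_3$.

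\emph{Encoding the data as a skew-symmetric derivation.} The metric on $M=\Heis_3(\RR)$ is the left-invariant metric of $N$, that is an inner product $\la\,,\,\ra$ on $\nn$ under $T_oM\cong\ggo/\hh\cong\nn$, and the center is non-degenerate precisely when $\la e_3,e_3\ra\neq0$. Let $X_0$ span $\hh$ and put $D=\ad(X_0)|_\nn\in\Der(\nn)$. Since $\RR=\exp(\RR X_0)$ acts by isometric automorphisms fixing the identity of $N$, the map $D$ is skew-symmetric for $\la\,,\,\ra$. Writing $X_0=e_0+v$ with $v\in\nn$, the inner part $\ad(v)|_\nn$ takes values in $\RR e_3$ and annihilates $e_3$; together with $[e_0,e_3]=0$ in $\ggo_0$ and $\ggo_1$ this gives $De_3=0$, while the rotation (resp. $\mathrm{diag}(1,-1)$) action on $\mathrm{span}\{e_1,e_2\}$ yields $D(\nn)+\RR e_3=\nn$.

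\emph{Conclusion.} Suppose, for contradiction, that the center is degenerate, i.e. $\la e_3,e_3\ra=0$. Skew-symmetry of $D$ and $De_3=0$ give $\la e_3,Dx\ra=-\la De_3,x\ra=0$ for every $x\in\nn$, so $e_3\perp D(\nn)$; combined with $\la e_3,e_3\ra=0$ and $\nn=D(\nn)+\RR e_3$ this forces $e_3\perp\nn$. Then $e_3$ lies in the radical of $\la\,,\,\ra$, contradicting the non-degeneracy of the metric. Hence $\la e_3,e_3\ra\neq0$ and the center of $\Heis_3(\RR)$ is non-degenerate.

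The step I expect to require the most care is the transition from the abstract pair $(G,\RR)$ to the concrete picture above: identifying $\nn$ with $C^1(\ggo)$, checking that the left-translation group really sits inside $G$ as a normal Heisenberg subgroup, and verifying that replacing the isotropy generator $e_0$ by $e_0+v$ changes $D$ only by a center-valued inner derivation, so that the two facts $De_3=0$ and $D(\nn)+\RR e_3=\nn$ persist. Once these are secured, the degeneracy argument is immediate.
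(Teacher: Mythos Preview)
Your argument is correct and follows the same overall strategy as the paper: reduce $\ggo$ to $\ggo_0$ or $\ggo_1$ via Proposition~\ref{prodim4} and the result of \cite{Ov1}, identify the Heisenberg copy inside $\ggo$, and then show that skew-symmetry of $D=\ad(X_0)|_{\nn}$ forces $\la e_3,e_3\ra\neq 0$.

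The execution differs in two places. First, the paper proves a separate lemma showing that \emph{any} three-dimensional subalgebra of $\ggo_i$ isomorphic to $\hh_3$ (with no normality assumption) must equal $\mathrm{span}\{e_1,e_2,e_3\}$; you instead invoke the hypothesis that $\RR$ acts by automorphisms to conclude $\nn$ is an ideal, and then use $C^1(\ggo_i)=\mathrm{span}\{e_1,e_2,e_3\}$ together with $\nn\supseteq C^1(\ggo)$ to pin down $\nn$. Your shortcut is legitimate under the stated hypothesis, while the paper's lemma is marginally stronger. Second, for the final step the paper writes $v=e_0+\alpha e_1+\beta e_2+\gamma e_3$, expands the six skew-symmetry relations $Q(\ad(v)e_i,e_j)+Q(e_i,\ad(v)e_j)=0$ explicitly for each $\ggo_i$, and observes from the resulting linear system that $\det(b_{ij})\neq 0$ implies $b_{33}\neq 0$. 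Your argument---$De_3=0$ and $D(\nn)+\RR e_3=\nn$, hence $\la e_3,e_3\ra=0$ would make $e_3$ orthogonal to all of $\nn$---is precisely the coordinate-free content of that system, and is cleaner. The one point you flag as delicate (that replacing $e_0$ by $e_0+v$ perturbs $D$ only by an inner derivation with values in $\RR e_3$ and kernel containing $e_3$) is exactly right and matches the paper's explicit formulas for $\ad(v)$.
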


Thus the property of the center  being non-degenerate characterizes the naturally reductive
 metrics on $\Heis_3(\RR)$  whenever the isometries fixing a point act by isometric 
isomorphisms.   

As known there is a one-to-one correspondence between left-invariant pseudo-Riemannian metrics on $\Heis_3(\RR)$ and metrics on
 the corresponding Lie algebra $\hh_3$, which is generated by $e_1, e_2, e_3$ obeying the non-trivial Lie bracket relation 
$[e_1, e_2]=e_3$. In order to prove the theorem above we start with the next result, which does not make use of any metric.

\begin{lem} Let $\ggo=\RR e_0 \oplus \hh_3$ where the commutator $C^1(\ggo) \subseteq \hh_3$ and the restriction of $\ad(e_0)$ 
to $\vv=span\{e_1, e_2\}$ is non-singular. If $\mm \subset \ggo$ is a Lie subalgebra of $\ggo$ which is isomorphic to $\hh_3$ 
then $\mm=\hh_3=span\{e_1, e_2, e_3\}$.
\end{lem}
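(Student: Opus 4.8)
The plan is to push everything through the quotient map $\pi\colon\ggo\to\ggo/\hh_3\cong\RR$ and to use the non-singularity of $\ad(e_0)|_{\vv}$ to force the relevant brackets into $\RR e_3$. First I would record two structural facts. Since $C^1(\ggo)\subseteq\hh_3$, the subspace $\hh_3$ is an ideal, so $\ad(e_0)$ restricts to a derivation of $\hh_3$; as $e_3$ generates $[\hh_3,\hh_3]$, this forces $[e_0,e_3]\in\RR e_3$. Consequently, for any $Y=y_1e_1+y_2e_2+y_3e_3\in\hh_3$ the $\vv$-component of $[e_0,Y]$ is exactly $\ad(e_0)|_{\vv}(y_1e_1+y_2e_2)$, while $[w,Y]\in\RR e_3$ for every $w\in\hh_3$ because $[\hh_3,\hh_3]=\RR e_3$.

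Next I would dispose of the easy case. The image $\pi(\mm)$ is a subalgebra of the one-dimensional abelian algebra $\RR$, hence either $0$ or all of $\RR$. If $\pi(\mm)=0$ then $\mm\subseteq\hh_3$, and since $\dim\mm=3=\dim\hh_3$ we conclude $\mm=\hh_3$, as desired. So it remains to rule out $\pi(\mm)=\RR$, which I would handle by contradiction.

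Assume $\mm\not\subseteq\hh_3$. Then $\mm\cap\hh_3$ is two-dimensional and $\mm$ contains an element $X=e_0+w$ with $w\in\hh_3$. Because $\mm\cong\hh_3$, its commutator $[\mm,\mm]=\zz(\mm)$ is one-dimensional and central in $\mm$, and it lies in $C^1(\ggo)\subseteq\hh_3$. Taking a nonzero $Z\in[\mm,\mm]$ and using $[Z,X]=0$, the vanishing of the $\vv$-component of $[Z,X]$ says that $\ad(e_0)|_{\vv}$ applied to the $\vv$-part of $Z$ is zero; by non-singularity the $\vv$-part of $Z$ vanishes, so $[\mm,\mm]=\RR e_3$. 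In particular $e_3\in\mm\cap\hh_3$, so I may choose $Y\in(\mm\cap\hh_3)\setminus\RR e_3$. Then $[X,Y]\in[\mm,\mm]=\RR e_3$, so its $\vv$-component is zero; but by the first paragraph this $\vv$-component equals $\ad(e_0)|_{\vv}$ applied to the nonzero $\vv$-part of $Y$, again forcing the $\vv$-part of $Y$ to vanish and contradicting $Y\notin\RR e_3$.

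The non-singularity of $\ad(e_0)|_{\vv}$ is the crux and is invoked twice: once to locate the center of the abstract copy $\mm$ at $\RR e_3$, and once to contradict the existence of a second, transverse generator. I expect the main obstacle to be the bookkeeping that identifies $\zz(\mm)=[\mm,\mm]$ with $\RR e_3$ purely from the abstract isomorphism $\mm\cong\hh_3$; everything after that is linear algebra in the fixed basis. It is worth noting that the hypothesis is essential, since when $\ad(e_0)|_{\vv}$ is singular the algebra $\ggo$ generally contains Heisenberg subalgebras through $e_0$ distinct from $\hh_3$, so the assumption cannot be weakened.
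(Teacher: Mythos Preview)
Your argument is correct and rests on the same key mechanism as the paper's proof: the non-singularity of $\ad(e_0)|_{\vv}$ is used to kill the $\vv$-component of elements that must bracket into $\RR e_3$. The paper carries this out in coordinates---it picks a Heisenberg basis $v_1,v_2,v_3$ of $\mm$, writes each $v_i=c_{i0}e_0+w_i+c_{i3}e_3$, uses $[v_i,v_3]=0$ to force $w_3=0$ when some $c_{i0}\neq0$, then uses $[v_1,v_2]=v_3$ to get $a_0w_2-b_0w_1=0$ and derives a linear dependence among $v_1,v_2,v_3$. Your organization is a bit more intrinsic: working through the quotient $\pi$ and the two-dimensional intersection $\mm\cap\hh_3$, you first pin down $[\mm,\mm]=\RR e_3$ and then get the contradiction directly from $[X,Y]\in\RR e_3$, bypassing the explicit basis and the final linear-dependence check. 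The observation $[e_0,e_3]\in\RR e_3$ (via $\ad(e_0)$ being a derivation of the ideal $\hh_3$) is a nice touch that the paper leaves implicit; it is exactly what makes the $\vv$-projection argument clean in general, not just in the concrete cases $\ggo_0,\ggo_1$.
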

\begin{proof}
Let $\mm$ denote a subalgebra of $\ggo$ such that $\mm=span\{v_1, v_2, v_3\}$ with $[v_1, v_2]=v_3$ and $[v_i, v_3]=0$ for i=1,2. Take
$$v_1= a_0 e_0 + w_1 + a_3 e_3 \qquad v_2 = b_0 e_0 + w_2 + b_3 e_3 \qquad v_3= c_0 e_0 + w_3 + c_3 e_3
$$
where $w_i \in span\{e_1, e_2\}$ for all i=1,2.  Since $C^1(\ggo)\subseteq span\{e_1, e_2, e_3\}$ it follows that $c_0=0$. Let $A$ denote the restriction of $\ad(e_0)$ to $\vv$, thus we have the following equations
$$
\begin{array}{rcl}
v_3 & = & [v_1, v_2] = A(a_0 w_2-b_0 w_1) + \omega(w_1, w_3) e_3\\ 
0 & = & [v_1, v_3]= a_0 A w_3 +\omega(w_1, w_3) e_3\\
0 & = & b_0 A w_3+\omega(w_2, w_3) e_3.
\end{array}
$$
If $a_0$ or $b_0$ is different from zero, then $w_3=0$ and so $v_3=c_3 e_3$. Therefore $a_0 w_2- b_0w_1=0$ and so we can write $w_2$ in terms of $w_1$ or $w_1$ in terms of $w_2$ depending on $a_0\neq 0$ or $b_0\neq 0$ respectively. It is not hard to see that putting these conditions in $v_1, v_2, v_3$ then one gets that the set $v_1, v_2, v_3$ is linearly dependent which is a contradiction. So $a_0=b_0=0$ and $\mm=span\{e_1, e_2, e_3\}$.
\end{proof}

Now if $G$ is a  Lie group acting by isometries on $\Heis_3(\RR)$ which is naturally reductive with pair $(G,H)$,  then
 $G$ is a semidirect extension of $\Heis_3(\RR)$ \cite{CP} and it admits a bi-invariant metric (according to Theorem  2.2 in \cite{Ov1}).
 Hence the Lie algebra of $G$ should be a solvable  Lie algebra of dimension four admitting an
 ad-invariant metric, therefore either  $\ggo_0$ or $\ggo_1$ of the previous section. Thus Theorem \ref{tfin} follows from the next result and the previous lemma.

\begin{lem} Let $\hh_3$ denote the Heisenberg Lie algebra of dimension three equipped with a naturally reductive metric with  
pair  $(\ggo_i, \RR)$ i=0,1 where $\RR\simeq \ggo_i/\hh_3$ acts by skew-adjoint derivations on $\hh_3$. Then the center of $\hh_3$ is non-degenerate.
\end{lem}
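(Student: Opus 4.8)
My plan is to express the given naturally reductive metric $g$ on $\hh_3$ in terms of the ad-invariant metric $Q$ carried by $\ggo_i$ (Proposition \ref{prodim4} and the following Remark), and then to read off the restriction of $g$ to the center from a single projection computation. First I would fix the algebraic picture. The isotropy $\RR\cong\ggo_i/\hh_3$ has a one-dimensional Lie algebra $\hh=\RR f_0$ complementary to the Heisenberg ideal $\hh_3=span\{e_1,e_2,e_3\}$, so $\ggo_i=\RR f_0\oplus\hh_3$ with $f_0=\lambda e_0+h$ for some $\lambda\neq 0$ and $h\in\hh_3$; after rescaling $f_0$ I take $\lambda=1$. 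By Theorem 2.2 of \cite{Ov1}, the group $G$ with Lie algebra $\ggo_i$ carries a bi-invariant metric $Q$, and the hypothesis that $\RR$ acts by skew-adjoint derivations is precisely what guarantees that the isotropy acts on $\Heis_3$ by isometric automorphisms; this is consistent with $G=\Heis_3\rtimes H$ and is what lets me take the reductive complement to be $\mm=\hh^{\perp}$ (orthogonal with respect to $Q$), the metric being the one induced by $Q|_{\mm}$.

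The crucial translation, which I expect to be the main obstacle, is to make precise that the given left-invariant metric reads $g(X,Y)=Q(pX,pY)$ for $X,Y\in\hh_3$, where $p\colon\ggo_i\to\mm$ is the projection along $\hh$. This should follow from the identification $\Heis_3\cong G/H$: its differential at the identity is the composite $\hh_3\hookrightarrow\ggo_i\to\ggo_i/\hh\cong\mm$, which is exactly $p|_{\hh_3}$, and this map is an isometry onto $(\mm,Q|_{\mm})$. Here I would be careful to check that $Q|_{\hh}$ is non-degenerate, i.e. $Q(f_0,f_0)\neq 0$: otherwise $f_0\in\hh^{\perp}=\mm$ and the direct sum $\ggo_i=\hh\oplus\mm$ would fail, so that in fact $p$ is well defined.

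Next I would record how $Q$ behaves on the generator $e_3$ of the center. The vector $e_3$ spans $\zz(\ggo_i)\cap C^1(\ggo_i)$, and from the explicit matrices in (\ref{gmatrix0})---or directly from ad-invariance---the vector $e_3$ is $Q$-null and $Q$-orthogonal to all of $\hh_3$, while non-degeneracy of $Q$ forces $Q(e_3,e_0)\neq 0$. In particular $Q(e_3,f_0)=Q(e_3,e_0)\neq 0$, uniformly in $i=0,1$.

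Finally I would compute $g$ on the center. Writing $p(e_3)=e_3-t f_0$ with $t=Q(e_3,f_0)/Q(f_0,f_0)$ chosen so that $p(e_3)\in\mm$, expansion gives
\[
g(e_3,e_3)=Q(e_3-tf_0,\,e_3-tf_0)=Q(e_3,e_3)-2t\,Q(e_3,f_0)+t^{2}Q(f_0,f_0)=-\frac{Q(e_3,f_0)^{2}}{Q(f_0,f_0)},
\]
using $Q(e_3,e_3)=0$. Since $Q(e_3,f_0)\neq 0$ and $Q(f_0,f_0)\neq 0$, this is non-zero, so the restriction of $g$ to $\zz(\hh_3)=\RR e_3$ is non-degenerate, which is the assertion. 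I note that the computation is uniform for $\ggo_0$ and $\ggo_1$ and never uses the detailed bracket relations, only that $e_3$ generates both the center and the commutator of $\hh_3$; the entire content beyond this is the bookkeeping of the first two paragraphs.
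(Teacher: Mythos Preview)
Your argument is correct and takes a genuinely different route from the paper's. The paper never invokes the ad-invariant metric on $\ggo_i$ in the proof of this lemma: it writes the generator of the isotropy as $v=e_0+\alpha e_1+\beta e_2+\gamma e_3$, computes $\ad(v)$ on $\hh_3$ explicitly (separately for $\ggo_0$ and $\ggo_1$), writes out the linear system coming from the skew-adjointness condition $Q(\ad(v)x,y)=-Q(x,\ad(v)y)$ in the unknown entries $b_{ij}=Q(e_i,e_j)$ of the metric on $\hh_3$, and then checks by hand that $\det(b_{ij})\neq 0$ forces $b_{33}\neq 0$. You instead invoke Theorem~2.2 of \cite{Ov1} more strongly, realizing the naturally reductive metric as the pull-back of $Q|_{\hh^\perp}$ along the projection $p$; a single computation, uniform in $i$, then gives $g(e_3,e_3)=-Q(e_3,f_0)^2/Q(f_0,f_0)\neq 0$. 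Your route is cleaner and coordinate-free, and it essentially bypasses the skew-adjoint hypothesis (the real input is the naturally reductive structure via \cite{Ov1}); the price is that you rely on the precise content of that external theorem --- specifically that the bi-invariant $Q$ it produces is non-degenerate with reductive complement $\mm=\hh^{\perp_Q}$ and $Q|_{\mm}$ inducing $g$. You should state this explicitly rather than leave it implicit, since the paper itself only cites \cite{Ov1} for the bare existence of a bi-invariant metric, and your justification that $Q(f_0,f_0)\neq 0$ (``otherwise the direct sum would fail'') is circular unless that stronger statement is in hand. The paper's proof, by contrast, is elementary and entirely self-contained once one knows $\ggo\in\{\ggo_0,\ggo_1\}$.
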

\begin{proof} Let $v\in \ggo_i$ be an element which is not in $span\{e_1, e_2, e_3\}$. Thus 
$\ggo_i=\RR v \oplus \hh_3$ and we may assume $v=e_0 + \alpha e_1 + \beta e_2 + \gamma e_3$ and 
$[v, \hh_3] \subseteq \hh_3$. 

For $\ggo_0$ the action of $\ad(v)$ is given by
 $$\ad(v)e_1= e_2 - \beta e_3 \qquad \ad(v)e_2= -e_1 + \alpha e_3 \qquad \ad(v) e_3 \equiv 0.$$
Let $Q$ denote a metric on $\hh_3$ such that $b_{ij}=Q(e_i, e_j)$ and  for which $\ad(v)$ is 
skew-adjoint. The condition  $Q(\ad(v) x, y)=-Q(x, \ad(v) y)$ for all $x,y \in \hh_3$ gives 
rise to  a system of equations on  the coefficients $b_{ij}$: 
$$\begin{array}{rclrclrcl}
b_{12}-\beta b_{13} & = & 0  \qquad b_{22}-\beta b_{13} & = & b_{11}-\alpha b_{13} \qquad b_{23}-\beta b_{33} & = & 0\\
b_{12}-\alpha b_{23} & = & 0  \qquad  b_{13}-\alpha b_{33} & = & 0.  \qquad \qquad \qquad \qquad \qquad  && 
\end{array}
$$
It is not hard to see that if we write $B=(b_{ij})$ then $\det B \neq 0$ implies $b_{33}\neq 0$, 
that is $Q$  non-degenerate implies the center of $\hh_3$ non-degenerate. 
 
This also applies for $\ggo_1$. One writes down the action of $\ad(v)$ and  from 
$Q(\ad(v) x, y)=-Q(x, \ad(v) y)$ the equations follow
$$\begin{array}{rclrclrcl}
b_{11}-\beta b_{13} & = & 0  \qquad b_{12}-\beta b_{23} & = & b_{12}-\alpha b_{13} \qquad b_{13}-\beta b_{33} & = & 0\\
b_{22}-\alpha b_{23} & = & 0   \qquad  b_{23}-\alpha b_{33} & = & 0.\qquad \qquad \qquad \qquad  \qquad & &    
\end{array}
$$
In this case also $b_{33} \neq 0$ says that  the center of $\hh_3$ must be non-degenerate.
\end{proof}
The  simply connected Lie group $\Heis_3(\RR)$  with Lie algebra $\hh_3$ can be
realized on the usual differentiable structure of $\RR^3$ together with the next multiplication
 $$(v,z)\cdot(v',z')=(v+v',z+z'+\frac{1}{2}v^TJv'),$$
where $v, v'\in \RR^2$, $v^T$ denotes the transpose matrix of the 2$\times$1 matrix v, and $J$ denotes
the matrix given by
$$J=\left( \begin{matrix} 0 & 1 \\
-1 & 0 \end{matrix} \right).
$$

A basis of left-invariant vector fields at every point $(x,y,z)\in \RR^3$ satisfying the non-trivial Lie
 bracket relation  $[X_1, X_2]=X_3$ is given by

\smallskip

\qquad $X_1  =   \deax - \frac{y}2\deaz$

\smallskip

\qquad $X_2  =   \deay  + \frac{x}2 \deaz$

\smallskip

\qquad $X_3 =  \deaz.$

\smallskip

Two non-isometric Lorentzian metrics on $\Heis_3(\RR)$ can be taken by defining
\begin{equation}\label{h1}
 1= \la X_1, X_1 \ra = \la X_2, X_2 \ra = - \la X_3, X_3 \ra
\end{equation}
\begin{equation}\label{h2}
 1= \la X_1, X_2 \ra =  \la X_3, X_3 \ra
\end{equation}
and the other relations are zero. Each of them is a naturally reductive  pseudo-Riemannian metric on $\Heis_3(\RR)$ with the following expression  in the usual coordinates of $\RR^3$:
$$\begin{array}{rcl}
   h_1 & =& (1-\frac{y^2}4)dx^2 +(1-\frac{x^2}4)dy^2- dz^2+ \frac14 xy \,dx dy -\frac{y}2 \,dx dz +\frac{x}2\,dy dz\\ \\
h_2 & = & \frac{y^2}4 \,dx^2 + \frac{x^2}4 \, dy^2 +  dz^2+ \frac14 xy \,dx dy +\frac{y}2 \,dx dz - \frac{x}2\,dy dz.
  \end{array}
$$

Making use of this information one can compute several geometrical features on $\Heis_3(\RR)$ \cite{Ov2}. Recall that an {\em algebraic Ricci soliton} on  $\Heis_3(\RR)$ is a left-invariant pseudo-Riemannian metric such that its  Ricci operator $\Rc$ satisfies 
the equality 
$$\Rc(g) = c \, \Id + D\qquad \mbox{ where $c\in \RR$ and $D$ is  a derivation of $\hh_3$,}
$$
 that is $D:\hh_3 \to \hh_3$ is a linear map which satisfies $D[x,y]=[Dx, y] + [x, Dy]$ for all $x,y\in \hh_3$.

\medskip

A pseudo-Riemannian manifold is called {\em locally symmetric} if $\nabla R \equiv 0$, where $\nabla$ denotes 
the covariant derivative with respect to the Levi-Civita connection and $R$ denotes the curvature tensor. The Ambrose-Hicks-Cartan theorem (see for example \cite[Thm. 17, Ch. 8]{ON}) states that given a 
complete locally symmetric 
pseudo-Riemannian  manifold $M$, a linear isomorphism
$A : T_p M \to T_p M$ is the differential of some isometry of $M$ that fixes the point $p\in M$ if and 
only if it preserves the symmetric bilinear form that the metric induces into the tangent space and if for 
every $u,v, w \in T_p M$ the following equation holds:
\begin{equation} \label{ACH}
 R(Au, Av)Aw = A R(u, v)w.
\end{equation}

In \cite{CP} it was proved that the isometry group corresponding to a pseudo-Riemannian left-invariant metric on a 2-step nilpotent Lie algebra is a semidirect product
$\Is(N)= N \rtimes F(N)$, where $F(N)$ denotes the isotropy subgroup at the identity element.
Thus $\Is(N)$ is essentially determined by $F(N)$.
Moreover
\begin{itemize}
\item if $h_0$ is a flat metric on $\Heis_3(\RR)$ then $(\Heis_3(\RR),h_0)$  is a 
locally symmetric space and 
therefore it applies the Ambrose-Hicks-Cartan theorem for the computation of $F(N)$. 
\item for the non-flat metrics the  action of the isotropy subgroup (of the full isometry group) 
at the identity element is given by isometric automorphisms \cite{CP} so
 that  $\Is(\Heis_{2n+1}(\RR)) = \Heis_{2n+1}(\RR)\rtimes H$, where $H$ denotes the group of 
isometric automorphisms. In \cite{Ov2} this group is described.
\end{itemize}

\begin{prop} \label{isoh} The isometry groups for the Lorentzian left-invariant metrics on $\Heis_3(\RR)$ are given by
\begin{itemize}
\item $\Is(\Heis_3(\RR), h_0)= \Heis_3(\RR)\rtimes \Or(2,1)$,
\item $\Is(\Heis_3(\RR), h_1)=  \Heis_3(\RR)\rtimes \Or(2)$,
\item $\Is(\Heis_3(\RR), h_2)= \Heis_3(\RR)\rtimes \Or(1,1)$.
\end{itemize}
Moreover both Lorentzian left-invariant non-flat metrics are algebraic Ricci solitons. 
\end{prop}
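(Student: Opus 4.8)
The plan is to reduce everything to the isotropy subgroup at the identity and then treat the flat and the non-flat metrics by different tools. By the theorem of \cite{CP} recalled above, for each left-invariant metric the isometry group splits as $\Is(\Heis_3(\RR)) = \Heis_3(\RR)\rtimes F(N)$, so it suffices to identify $F(N)$ in each of the three cases.

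For the flat metric $h_0$ I would invoke the Ambrose-Hicks-Cartan theorem. Since $h_0$ is flat it is locally symmetric, and $(\Heis_3(\RR),h_0)$ is a complete homogeneous Lorentzian space, so a linear map $A$ of the tangent space at the identity is the differential of an isometry fixing that point if and only if it preserves the induced bilinear form and satisfies (\ref{ACH}). Because $R\equiv 0$, the curvature condition $R(Au,Av)Aw=AR(u,v)w$ holds vacuously; hence $F(N)$ is the full group of linear isometries of the tangent space, which carries a Lorentzian inner product of signature $(2,1)$. This yields $F(N)=\Or(2,1)$.

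For the non-flat metrics $h_1$ and $h_2$ I would use that, by \cite{CP}, the isotropy acts by isometric automorphisms of $\hh_3$. Writing $\hh_3=\vv\oplus\RR e_3$ with $\vv=\mathrm{span}\{e_1,e_2\}$ and $\RR e_3$ the nondegenerate center, any automorphism $\phi$ preserves the center, and being an isometry it also preserves $\vv=(\RR e_3)^{\perp}$. Thus $\phi$ restricts to $A\in\Gl(\vv)$, and the bracket $[e_1,e_2]=e_3$ forces $\phi(e_3)=\det(A)\,e_3$. The isometry condition on the center gives $(\det A)^2=1$, while on $\vv$ it forces $A$ to preserve the induced metric. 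Since $\vv$ is Euclidean for $h_1$ (from (\ref{h1})) and of signature $(1,1)$ for $h_2$ (from (\ref{h2})), and $\det A=\pm1$ holds automatically in each of these groups, one gets $F(N)=\Or(2)$ and $F(N)=\Or(1,1)$ respectively; conversely every such $A$ visibly extends to an isometric automorphism, so these exhaust the isotropy.

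For the soliton claim I would compute the Ricci operator $\Rc$ of $h_1$ and $h_2$ in the center-adapted basis $\{e_1,e_2,e_3\}$ and observe, from the $2$-step nilpotent structure, that it is diagonal, say $\Rc=\mathrm{diag}(r_1,r_2,r_3)$. Setting $c=r_1+r_2-r_3$ and $D=\Rc-c\,\Id$ gives $D=\mathrm{diag}(r_3-r_2,\,r_3-r_1,\,2r_3-r_1-r_2)$, whose $(3,3)$-entry equals the sum of its first two diagonal entries and whose $e_3$-row vanishes; this is precisely the condition for a diagonal endomorphism to be a derivation of $\hh_3$ (a derivation $D$ must satisfy $d_{13}=d_{23}=0$ and $d_{33}=d_{11}+d_{22}$). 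Hence $\Rc=c\,\Id+D$ with $D\in\Der(\hh_3)$, so both metrics are algebraic Ricci solitons. I expect the main obstacle to be the correct pseudo-Riemannian computation of $\Rc$ with its signs and the verification that it is diagonal in this basis; once that is secured, the soliton identity is immediate and the isometry computations are the elementary linear algebra above.
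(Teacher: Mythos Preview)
Your argument is correct and, for the isometry groups, follows exactly the line the paper takes: the semidirect decomposition from \cite{CP}, the Ambrose--Hicks--Cartan theorem for the flat metric (where the curvature condition is vacuous and $F(N)=\Or(2,1)$), and the identification of $F(N)$ with the group of isometric automorphisms for the non-flat metrics, which the paper simply cites from \cite{Ov2} while you spell out the elementary linear-algebra step.

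For the Ricci soliton claim the paper is more bare-handed: it just records the explicit Ricci operator and writes $\Rc(h_1)=\Rc(h_2)=\tfrac32\,\Id - D$ with $D(X_1)=-X_1$, $D(X_2)=-X_2$, $D(X_3)=-2X_3$. Your route is slightly more conceptual: once $\Rc$ is diagonal in the center-adapted basis, the identity $\Rc=c\,\Id+D$ with $c=r_1+r_2-r_3$ and $D$ diagonal satisfying $d_{33}=d_{11}+d_{22}$ is automatic, so \emph{any} such diagonal $\Rc$ on $\hh_3$ is an algebraic soliton. This buys a cleaner statement, but note that the diagonality you flag as the ``main obstacle'' is not a consequence of the $2$-step structure alone; it follows here because $\Rc$ must commute with the isotropy $\SO(2)$ (resp.\ $\SO(1,1)$) acting on $\vv$ with distinct eigenvalues, which forces $\Rc$ to preserve each line $\RR e_i$. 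With that remark added, your argument is complete and equivalent to the paper's.
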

\begin{proof}  
The description of the isometry group for a 2-step nilpotent Lie group equipped with a 
left-invariant metric obtained  in \cite{Ov2} and the observations above give the proofs of 
the isometry groups. Notice that the connected component of the identity are $G_0$ and $G_1$ for $h_1$ and $h_2$ respectively (see the description of $G_0$ and $G_1$ in the next section). 

By computing the Ricci tensor in the case of the naturally reductive metrics $h_1$ and $h_2$ one verifies that the corresponding Ricci operators satisfy
 \begin{equation}
 \Rc(h_1)= \Rc(h_2) = \frac32 \Id -D 
 \end{equation}
  where $D$ is the derivation of $\hh_3$ given by 
 $$D(X_1)= -X_1 \qquad D(X_2)= -X_2 \qquad D(X_3)= -2X_3,$$
showing that both $h_1$ and $h_2$ are algebraic Ricci solitons.
\end{proof}

\begin{rem}It can be verified that the Lie groups $G_0$ and $G_1$ act by 
isometries on $(\Heis_3(\RR), h_1)$ and $(\Heis_3(\RR), h_2)$ respectively. 
 Compare with \cite{BR} for the isometry groups. 
For Ricci solitons see \cite{BO}.
\end{rem}
 
\begin{rem}
 A left-invariant Lorentzian metric on $\Heis_3(\RR)$ is flat if and only if the center is degenerate \cite{Ge}. In \cite{Ov3} the flat Lorentzian metric on $\RR \times \Heis_3(\RR)$ given in \cite{Ge} is proved to be  naturally reductive   
and it admits an action by isometries of the free 3-step nilpotent Lie group in two generators. 

Left-invariant pseudo-Riemannian metrics on 2-step nilpotent Lie groups are geodesically complete \cite{Ge0, CP}.
 \end{rem}

\section{Simply connected solvable Lie groups with a bi-invariant metric in dimension four}

Our aim now is to describe geometrical features of the simply connected solvable Lie groups of dimension four provided with a bi-invariant metric. More precisely those corresponding to the Lie algebras $\ggo_0$ and $\ggo_1$ described in Proposition \ref{prodim4}. 

Recall that if $G$ is a connected real Lie group, its Lie algebra $\ggo$ is identified with the Lie algebra of left-invariant vector fields on $G$.  Assume $G$ is 
  endowed with a left-invariant pseudo-Riemannian metric $\la\,,\,\ra$. Then 
  the following statements are equivalent (see \cite[Ch. 11]{ON}):

\begin{enumerate}
\item $\la\, ,\,\ra$  is right-invariant, hence bi-invariant;
\item $\la\,,\,\ra$  is $\Ad(G)$-invariant;
\item  the inversion map $g \to g^{-1}$ is an isometry of $G$;
\item $\la [X, Y], Z\ra + \la Y, [X, Z] \ra= 0$ for all $X,Y,Z \in \ggo$;
\item $\nabla_XY = \frac12 [X, Y]$ for all $X,Y  \in \ggo$, where
$\nabla$ denotes the Levi Civita connection;
\item  the geodesics of $G$ starting at the identity element $e$ are the 
one parameter subgroups of $G$.
\end{enumerate}

 By (3) the pair $(G, \la\,,\,\ra)$ is a pseudo-Riemannian symmetric space. Furthermore by computing
   the curvature tensor one has
 \begin{equation}
 R(X, Y) = - \frac14 \ad([X, Y]) \qquad \quad  \mbox{ for } X,Y \in
 \ggo.
 \label{curvatura}
 \end{equation}
\subsection{Structure of the Lie groups}

 The action of $e_0$ on $\hh_3$ on both  Lie algebras $\ggo_0$ and $\ggo_1$,
lifts to a Lie group homomorphism $\rho: \RR \to \Aut(\Heis_3(\RR))$ which on
 $(v, z)\in \RR^2 \oplus \RR$ has a matrix of the form
\begin{equation}
\rho(t)=\left( \begin{matrix}
R_i(t) & 0 \\
0 & 1 \end{matrix} 
\right) \qquad \qquad i=0,1
\label{rho}\end{equation}
where
\begin{equation}
R_0(t)  =  {\left( \begin{matrix}
\cos \,t & -\sin \,t \\ \sin \,t & \cos \,t \end{matrix} \right) } \mbox{ for }
\ggo_0, \qquad 
R_1(t)  =  {\left( \begin{matrix}
e^t & 0 \\ 0 & e^{-t} \end{matrix} \right)}  \mbox{ for }
\ggo_1.
\label{R0}
\end{equation}

Let $G_0$ and $G_1$ denote the simply connected Lie groups with respective Lie algebras $\ggo_0$ and $\ggo_1$. Then $G_0$ and $G_1$ are modeled on the smooth manifold $\RR^4$, where the algebraic structure is the resulting from the
semidirect product of $\RR$ and $\Heis_3(\RR)$, via $\rho$. Thus on $G_i$ for $i=0,1$, the multiplication is given by
\begin{equation}\label{oper}
 (t,v,z) \cdot (t',v',z')  =  (t+t', v+ R_i(t)v',
z+z'+\frac12 v^T JR_i(t) v'). 
\end{equation}

This information is useful in order to find a basis of the left-invariant
vector fields. For $G_0$ such a basis  at every point $(t,x,y,z)\in \RR^4$ is given by the following vector fields, each of them evaluated at $(t,x,y,z)$: 

\smallskip

\qquad $X_0  =  \deat$

\smallskip

\qquad $X_1  =  \cos\, t \, \deax + \sin \,t \, \deay
 + \frac12(x \,\sin\, t - y \,\cos \,t) \,\deaz$ 

\smallskip

\qquad $X_2  =  -\sin \,t \, \deax + \cos \,t \, \deay + \frac12(x \, \cos\, t + y\, \sin \,t) \, \deaz$

\smallskip

\qquad $X_3 = \deaz$

\smallskip
\noindent and for $G_1$ it is given by

\smallskip

\qquad $X_0  =  \deat$

\smallskip 

\qquad $X_1  =  e^t \, \deax -\frac{1}{2}y\,e^t \, \deaz$
 
 \smallskip
 
 \qquad $X_2  =  e^{-t} \, \deay +\frac{1}{2}x\,e^{-t} \,\deaz$
  
 \smallskip
 
 \qquad $X_3  =  \deaz$.

\smallskip

These vector fields verify the relations given in (\ref{lbg0}) and (\ref{lbg1}) respectively. 
 
For every $i=0,1$ the bi-invariant metric on $G_i$ induced by the ad-invariant metric on
 $\ggo_i$ described in (\ref{gmatrix0}) induces on $\RR^4$  the next pseudo-Riemannian metric (in the usual coordinates):
$$\begin{array}{rcll}
g_0&=&dz\,dt+dx^2+dy^2+\frac12(y dx\,dt-x dy\,dt) & \mbox{ for } G_0\\
 g_1&=&dz\,dt+dx\,dy+\frac12(y dx\,dt-x dy\,dt) & \mbox{ for } G_1.\end{array}
$$

\subsection{Geodesics}  Computing  the Christoffel symbols of the Levi-Civita connection for the metrics $g_0, g_1$ (cf. \cite{ON}),  a curve $\alpha(s)=(t(s),x(s),y(s),z(s))$ is a geodesic in $G_i$ if its components satisfy the  second order system of differential equations:

$\bullet$ for $G_0$
$$\left\{\begin{array}{rcl}
t''(s)&=&0,\\
x''(s)&=&-t'(s)y'(s),\\
y''(s)&=&t'(s)x'(s),\\
z''(s)&=&\frac{1}{2}\;t'(s)(x(s)x'(s)+y(s)y'(s)).
\end{array}\right.$$ 

$\bullet$ for $G_1$
$$\left\{\begin{array}{rcl}
t''(s)&=&0,\\
x''(s)&=&t'(s)x'(s),\\
y''(s)&=&-t'(s)y'(s),\\
z''(s)&=&-\frac{1}{2}\;t'(s)(x(s)y'(s)+y(s)x'(s)).
\end{array}\right.$$ 

On the other hand, if $X_e=\sum_{i=0}^{3}a_{i}X_{i}(e)\in T_{e} G_{0}$, then the geodesic $\alpha$ through $e$ with initial condition $\alpha'(0)=X_e$ is the integral curve of the left-invariant vector field $X=\sum_{i=0}^{3}a_{i}X_{i}$. Suppose $\alpha(s)=(t(s),x(s),y(s),z(s))$ is the curve satisfying  $\alpha'(s)=X_{\alpha(s)}$, then its coordinates are as below.

On $G_0$, for $a_0\neq 0$: 
\begin{eqnarray*} \label{geodcomp}
t(s)&=&a_{0}s,\\
x(s)&=&\frac{a_{1}}{a_{0}}\sin a_{0}s+\frac{a_{2}}{a_{0}}\cos a_{0}s-\frac{a_{2}}{a_{0}},\\
y(s)&=&-\frac{a_{1}}{a_{0}}\cos a_{0}s+\frac{a_{2}}{a_{0}}\sin a_{0}s+\frac{a_{1}}{a_{0}},\\
z(s)&= &\frac{1}{2}\left[ \left(\frac{a_{1}^{2}}{a_{0}}+\frac{a_{2}^{2}}{a_{0}}+2a_{3}\right)s-\left(\frac{a_{2}^{2}}{a_{0}^{2}}+\frac{a_{1}^{2}}{a_{0}^{2}}\right)\sin a_{0}s \right].\end{eqnarray*}

 If $a_{0}=0$, it is easy to see that $\alpha(s)=(0,a_{1}s,a_{2}s,a_{3}s)$ is the corresponding geodesic.

On $G_1$ for $a_0\neq 0$:
\begin{eqnarray*}
t(s)&=&a_{0}s,\\
x(s)&=&\frac{a_{1}}{a_{0}}e^{a_{0}s}-\frac{a_1}{a_0},\\
y(s)&=&-\frac{a_{2}}{a_{0}}e^{-a_0s}+\frac{a_2}{a_0},\\
z(s)&= & \left(\frac{a_{1}a_2}{a_{0}}+a_3\right)s-\frac{a_{1}a_2}{a_{0}^{2}}\sinh(a_0s).
\end{eqnarray*}

 If $a_{0}=0$ again $\alpha(s)=(0,a_{1}s,a_{2}s,a_{3}s)$ is the corresponding geodesic.

As a consequence if $X=\sum_{i=0}^{3}a_{i}X_{i}(e)$, the exponential map is

\noindent
$\bullet$ On $G_0$, if $a_{0}\neq 0$, 
$$\exp (X)=
  \displaystyle{\left(a_0,\frac{1}{a_0}(R_0(a_0)J-J) (a_1,a_2)^t,a_3 +\frac{1}{2}\left(\frac{a_1^2}{a_0}+\frac{a_2^2}{a_0}\right)\left(1-\frac{\sin a_0}{a_0}\right)\right)}$$
 $\text{ if } a_{0}= 0$, 
 $$\exp (X)= \displaystyle{ \left(0,a_1,a_2,a_3\right)}.$$

\noindent
$\bullet$ On $G_1$, if $a_0\neq 0$
$$\exp(X)=\left(a_0,\frac{a_1}{a_0}(e^{a_0}-1),\frac{a_2}{a_0}(1-e^{-a_0}),\frac{a_1a_2}{a_0}+a_3-\frac{a_1a_2}{a_0^2}\sinh(a_0)\right)$$
 $\text{ if } a_{0}= 0$, 
 $$\exp (X)= \displaystyle{ \left(0,a_1,a_2,a_3\right)}.$$

 In both cases the geodesic passing through the point $g\in G_i$, $i=0,1$ and with derivative the left-invariant vector field $X$,  is the translation on the left of the one-parameter group at $e$, that is $\gamma(s)= g \exp(sX)$ for $\exp(sX)$ given above. 
 \subsection{Isometries}
    Let $G$ be a connected Lie group with a bi-invariant metric, and let 
    $\Is(G)$ denote the
isometry group of $G$. This is a Lie group when endowed with the compact-open topology. Let $\varphi$ be an isometry such that $\varphi(e)=x$, for $x\neq e$. Then $L_{x^{-1}} \circ \varphi$ is an isometry which fixes the element $e\in G$. Therefore $\varphi=L_{x} \circ f$ where $f$ is an isometry such that $f(e)=e$.  Let $F(G)$ denote the isotropy subgroup of the identity $e$ of $G$ and let
$L(G) := \{L_g : g \in G\}$, where $L_g$ is the  translation on the left by $g\in G$. 
Then $F(G)$ is a closed subgroup of $G$ and the explanation above says
\begin{equation}\label{desciso}
\Is(G) = L(G) F(G) = \{L_g \circ f : f \in F(G), g\in G\}.\end{equation}
Thus $\Is(G)$ is essentially determined by $F(G)$.

The following lemma is proved by applying the Ambrose-Hicks-Cartan Theorem (\ref{ACH}) to the Lie group $G$ equipped with a bi-invariant metric and whose  curvature formula was given in  (\ref{curvatura}). In this way one gets a geometric proof of the next result (see \cite{Mu}). 
 
 \begin{lem} \label{iso} Let $G$ be a simply connected Lie group with a bi-invariant
 pseudo-Riemannian metric. Then a linear endomorphism $A : \ggo \to \ggo$ 
 is the differential of some isometry in $F(G)$ if and only if for all $X,Y, Z\in \ggo$, the linear map $A$ satisfies the following two conditions:

\vskip 3pt

\ri \quad $\la A X, A Y \ra = \la X, Y\ra $;

\vskip 3pt

\rii \quad $ A[[X, Y], Z] = [[AX, AY], AZ]$.

\vskip 3pt
\end{lem}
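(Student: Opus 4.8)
The plan is to apply the Ambrose-Hicks-Cartan theorem, as stated in (\ref{ACH}), at the identity element $e$, after identifying $T_e G$ with $\ggo$. First I would check that the hypotheses of that theorem are met by $(G, \la\,,\,\ra)$: the space is geodesically complete, because by item (6) of the list of equivalent conditions above the geodesics through $e$ are the one-parameter subgroups $s\mapsto \exp(sX)$, which are defined for all $s\in\RR$; it is locally symmetric, i.e. $\nabla R\equiv 0$, because by item (3) the pair $(G, \la\,,\,\ra)$ is a pseudo-Riemannian symmetric space; and it is simply connected by hypothesis, which is precisely what allows the isometry furnished by the theorem to be globally defined while fixing $e$.

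A minor point to settle before invoking (\ref{ACH}) is that the theorem is phrased for a linear \emph{isomorphism} $A$, whereas the lemma allows an arbitrary endomorphism. This causes no difficulty: if $A$ satisfies (i), i.e. $\la AX, AY\ra = \la X, Y\ra$ for all $X, Y$, then $AX=0$ forces $\la X, Y\ra = 0$ for every $Y$, hence $X=0$ by non-degeneracy of the metric. So any $A$ satisfying (i) is automatically injective and therefore bijective, and conditions (i) and (ii) may be checked for endomorphisms without loss of generality.

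With these verifications in place, (\ref{ACH}) asserts that $A$ is the differential of an isometry in $F(G)$ if and only if $A$ preserves $\la\,,\,\ra$ --- which is exactly condition (i) --- and $R(AX, AY)AZ = A\,R(X, Y)Z$ for all $X, Y, Z\in\ggo$. It then remains only to rewrite this curvature condition using the bi-invariant curvature formula (\ref{curvatura}), namely $R(X,Y) = -\tfrac14\ad([X,Y])$. Substituting gives
$$R(AX, AY)AZ = -\tfrac14[[AX, AY], AZ], \qquad A\,R(X, Y)Z = -\tfrac14\,A[[X, Y], Z],$$
so that, after cancelling the common factor $-\tfrac14$, the curvature condition is equivalent to $[[AX, AY], AZ] = A[[X, Y], Z]$, which is precisely condition (ii). This establishes the stated equivalence.

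I expect no genuine obstacle in the computation itself, which collapses to a one-line substitution once the curvature is expressed through $\ad$. The only points demanding care are the verification of the hypotheses of the Ambrose-Hicks-Cartan theorem --- completeness and, above all, the simple connectivity needed to pass from the infinitesimal, curvature-preserving datum $A$ to a globally defined element of $F(G)$ --- together with the observation that condition (i) already secures the invertibility of $A$.
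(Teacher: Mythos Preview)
Your proposal is correct and follows precisely the approach the paper itself indicates: the paper merely remarks that the lemma ``is proved by applying the Ambrose--Hicks--Cartan Theorem (\ref{ACH}) to the Lie group $G$ equipped with a bi-invariant metric and whose curvature formula was given in (\ref{curvatura}),'' and your write-up fleshes out exactly that sketch---verifying completeness and local symmetry from items (6) and (3), and substituting $R(X,Y)=-\tfrac14\ad([X,Y])$ into the curvature-preservation condition to obtain (ii). Your additional remark that condition (i) forces $A$ to be invertible is a useful clarification the paper omits.
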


Notice that if $G$ is simply connected, every local isometry of $G$ extends to a unique global one. Therefore the full group of isometries of $G$ fixing the identity is isomorphic to the group of linear isometries of $\ggo$ that satisfy condition $\rii$ of Lemma \ref{iso}. 
By applying this to our case, one gets the next result.   

\begin{thm} \label{tiso} Let $G$ be a non-abelian, simply connected solvable Lie group 
of dimension four endowed with a bi-invariant metric. Then the group of
isometries fixing the identity element $F(G)$ is isomorphic to:
     \begin{itemize}
\item $(\{1,-1\}\times \Or(2))\ltimes \RR^2$ for $G_0$,
\item $(\{1,-1\}\times\Or(1, 1))\ltimes \RR^2$ for $G_1$.
\end{itemize}

In particular the connected component of the identity of $F(G)$ coincides with the
group of inner automorphisms $\{I_g:G_0\to G_0,\; I_g(x)=gxg^{-1}\}_{g\in G}$.
\end{thm}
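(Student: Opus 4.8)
The plan is to invoke Lemma \ref{iso}: since $G=G_i$ is simply connected, $F(G)$ is isomorphic to the group of linear isometries $A$ of $(\ggo_i,\la\,,\,\ra)$ satisfying the curvature condition $\rii$, namely $A[[X,Y],Z]=[[AX,AY],AZ]$ for all $X,Y,Z$. The whole problem thus reduces to determining this matrix group for $\ggo_0$ and $\ggo_1$ and recognizing its structure. First I would show that any such $A$ preserves the canonical flag $\zz(\ggo_i)=\RR e_3\subseteq C^1(\ggo_i)=\hh_3=\operatorname{span}\{e_1,e_2,e_3\}$. Both subspaces are intrinsically attached to the trilinear map $T(X,Y,Z)=[[X,Y],Z]$ that $A$ intertwines: a direct bracket computation gives $\operatorname{Im}T=\hh_3$ and $\{Z:T(\cdot,\cdot,Z)\equiv 0\}=\RR e_3$, and both are therefore $A$-invariant (for the first, $A(\operatorname{Im}T)\subseteq\operatorname{Im}T$; for the second one uses surjectivity of $A$). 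Passing to the quotient, $A$ descends to $\bar A\in\Or(\hh_3/\RR e_3)$, and since the induced metric on $\hh_3/\RR e_3\cong\vv$ is $B^0$ for $\ggo_0$ and $B^{1,1}$ for $\ggo_1$, this produces the factor $\Or(2)$, resp. $\Or(1,1)$.

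Next I would pin down $A$ on the complement. Writing $Ae_0=\epsilon e_0+(\text{terms in }\hh_3)$ and $Ae_3=\delta e_3$, condition $\ri$ immediately gives $\epsilon\delta=1$. The extra input is the Ricci operator: because $A$ preserves both the curvature (\ref{curvatura}) and the metric, it commutes with $\Rc$; computing $\Rc$ from the Killing form shows it is the rank-one nilpotent operator $e_0\mapsto\frac12 e_3$ (up to sign), $\hh_3\mapsto 0$, so $A\Rc=\Rc A$ forces $\delta=\epsilon$, hence $\epsilon=\delta=\pm1$. This $\pm1$ is the factor $\{1,-1\}$. The remaining freedom is the $\vv$-component $u=(u_1,u_2)$ of $Ae_0$; the isometry equations then determine all $e_3$-components of $Ae_0,Ae_1,Ae_2$ in terms of $u$, $\epsilon$ and the induced matrix $P$. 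A parameter count already matches the target: $\epsilon\in\{\pm1\}$, $P$ in $\Or(2)$ (resp. $\Or(1,1)$), and $u\in\RR^2$.

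The step I expect to be the crux is verifying that condition $\rii$ imposes nothing beyond this, and in particular does not couple $\det P$ to $\epsilon$. The point is that maps in $F(G)$ need not be Lie-algebra automorphisms: since $[Ae_1,Ae_2]=(\det P)\,e_3$ while $Ae_3=\epsilon e_3$, a genuine automorphism would require $\det P=\epsilon$, but a single inner bracket such as $[e_1,e_2]=e_3$ enters $\rii$ only through the vanishing double bracket $[[e_1,e_2],\cdot]=[e_3,\cdot]=0$, so it is invisible to the curvature condition. Checking the double-bracket identities on a spanning set of triples, one finds each equivalent to an isometry relation already imposed by $\ri$; for instance $[[Ae_0,Ae_1],Ae_0]=Ae_1$ reproduces exactly the equation coming from $\la Ae_0,Ae_1\ra=0$. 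Hence $\det P$ and $\epsilon$ vary independently, yielding the direct factor $\{1,-1\}\times\Or(2)$ (resp. $\{1,-1\}\times\Or(1,1)$); the abelian unipotent part $u\in\RR^2$ is normal and is acted on by this factor in the obvious way, giving the stated semidirect product.

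Finally, a short computation of the composition law identifies the identity component, characterized by $\epsilon=1$ and $P\in\SO(2)$ (resp. $\SO^{+}(1,1)$) with $u$ arbitrary, with $\{e^{\ad X}:X\in\ggo_i\}=\Ad(G_i)=\Inn(G_i)$: the map $\Ad(\exp te_0)$ realizes the $\SO$-part, $\Ad(\exp(ae_1+be_2))$ realizes the $\RR^2$-part, and $\exp(\RR e_3)$ is central and hence trivial under $\Ad$. This proves the ``in particular'' clause for both $G_0$ and $G_1$.
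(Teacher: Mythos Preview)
Your proposal is correct and follows essentially the same architecture as the paper's proof: invoke Lemma~\ref{iso}, show that any admissible $A$ preserves the flag $\RR e_3\subset\hh_3$, determine the resulting block-triangular form, and then identify the identity component with $\Ad(G_i)$. The two arguments differ only in tactics. Where the paper extracts $a_{00}^2=1$ and the $e_3$-components $a_{3j}$ directly from the double-bracket identity applied to the triples $(e_0,e_1,e_0)$ and $(e_0,e_2,e_0)$, you instead obtain $\epsilon=\delta$ by observing that $A$ must commute with the Ricci operator $\Rc=-\tfrac14 B^\sharp$, which for both $\ggo_0$ and $\ggo_1$ is the rank-one nilpotent $e_0\mapsto\tfrac12 e_3$. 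That is a clean shortcut and has the advantage of working uniformly for $\ggo_0$ and $\ggo_1$. One small caveat: your remark that ``$[[Ae_0,Ae_1],Ae_0]=Ae_1$ reproduces exactly the equation coming from $\la Ae_0,Ae_1\ra=0$'' is slightly imprecise, since that double-bracket identity also re-derives $\epsilon^2=1$; but as you have already secured $\epsilon=\pm1$ via Ricci, this does not affect the argument. The verification that condition~\rii\ imposes no further constraints and that $\det P$ is decoupled from $\epsilon$ is handled at the same level of detail in both proofs.
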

 \begin{proof} We proceed with $\ggo_0$, the case of $\ggo_1$ follows with the same procedure.
 
  Let $A:\ggo_0\to \ggo_0$ be a linear isometry that satisfies the conditions of Lemma \ref{iso}. 
 
 Since $C^1(\ggo_0)$ coincides with $C^2(\ggo_0)$ 
 it follows that $AC^1(\ggo_0)\subseteq C^1(\ggo_0)$. We also have $[C^{1}(\ggo_0),C^{1}(\ggo_0)]=span\{e_3\}$ and from the relation $-Ae_3=[Ae_1,[Ae_1,
 Ae_0]]$ one has $Ae_3=a_{33} e_3$. Thus we may assume that in the basis $\{e_0, e_1, e_2, e_3\}$
 the map $A$ has a matrix of the form
 $$\left( \begin{matrix}
 a_{00} & 0 & 0 & 0\\
 a_{10} & a_{11} & a_{12} & 0\\
 a_{20} & a_{21} & a_{22} & 0\\
 a_{30} & a_{31} & a_{32} & a_{33}
 \end{matrix}
 \right).
 $$
 From $\la Ae_0, Ae_3\ra=1$ it follows that 
 \begin{equation}
 a_{00} a_{33}=1.
 \label{eq1}
 \end{equation}
 From $\la Ae_i,Ae_j\ra=\delta_{ij}$, for $i,j=1,2$ one gets that 
\begin{equation}
\tilde{A} := \left( \begin{matrix}a_{11} & a_{12}\\ a_{21} & a_{22}
 \end{matrix} \right) \in \Or(2).
 \label{eqo2}
 \end{equation}
 Now $A[e_0,[e_1, e_0]]=[Ae_0,[Ae_1, Ae_0]]=Ae_0$ implies
 \begin{equation}
 a_{00}^2a_{11}=a_{11},\qquad\qquad a_{00}^2a_{21}=a_{21}
 \label{eq3}
 \end{equation}
 \quad\mbox{ and }\quad
\begin{equation}
 a_{31}=-a_{00}(a_{10}a_{11} + a_{20}
 a_{21}).
\label{eq4}
 \end{equation}

 Equations (\ref{eq1}), (\ref{eqo2}) and (\ref{eq3}) assert
 \begin{equation}
 a_{00}=a_{33}=\pm 1.
 \end{equation}

Now from $A[e_0,[e_2, e_0]]=[Ae_0,[Ae_2, Ae_0]]=Ae_2$ one has
\begin{equation}
a_{32}=-a_{00}(a_{10}a_{12} + a_{22} a_{20}).
\label{eq5}
\end{equation}
Set $w=(a_{10},a_{20})^{T}$, from (\ref{eq4}) and (\ref{eq5}) it follows that $(a_{31},a_{32})=\mp w^{T}\widetilde{A}$. 

Finally, the relation $\la Ae_0,Ae_0\ra=0$ implies $a_{30}=\mp\frac{1}{2}|| w||^{2}$. 
 Therefore
 \begin{equation}\label{isom}
 A=\left( 
 \begin{matrix}
 \pm 1 & 0 & 0 \\
 w & \tilde{A} & 0 \\
 \mp\frac12 ||w||^2 & \mp w^T \tilde{A} & \pm 1
 \end{matrix}
 \right).
 \end{equation}
where $w\in \RR^2$ and $\tilde{A}\in \Or(2)$. Moreover any matrix of the form (\ref{isom}) verifies (i) and (ii) of Lemma \ref{iso}. This gives a group isomorphic to $(\{1,-1\}\times \Or(2))\ltimes \RR^2$ for which the identity component corresponds to those matrices of the form (\ref{isom}) with $a_{00}=a_{33}=1$ and $\widetilde{A}\in \rm{SO}(2)=$$\{R_0(t):t\in\RR\}$.
 
  On the other hand, the set of isometric automorphisms of $\ggo_0$ coincides with the set $\Ad(G_0)$, that is, the matrices of the form  
  $$\Ad(t,v)= \left( \begin{matrix}
  1 & 0 & 0 \\
  Jv & R_0(t) & 0 \\
  -\frac12 ||v||^2 & -(Jv)^T R_0(t) & 1
  \end{matrix}
  \right), \qquad  v\in \RR^2.
  $$
being $A(t,v)=\Ad(t,v,z)$ for $v=(x,y)$. By dimension and since $Ad(G_0)$ is connected, 
it must coincide with the identity component. 

The procedure for $\ggo_1$ is  the same. In this case we obtain that in the basis 
$\{e_0,\cdots, e_3\}$, the matrix of a linear isometry of $\ggo_1$ that satisfies the conditions
 of Lemma \ref{iso} is of the form
\begin{equation}\label{isomg1}
 A=\left( 
 \begin{matrix}
 \pm 1 & 0 & 0 \\
 w & \tilde{A} & 0 \\
 \mp\frac12 ||w||^2 & \mp w^T \tilde{J}\tilde{A} & \pm 1
 \end{matrix}
 \right).
 \end{equation}
with $w=(x,y)^{T}\in\RR^{2}$, $||w||^2=2xy$, $\tilde{A}\in \mathbf{O}(1,1)$ and $\tilde{J}=\left(\begin{matrix}
 0 & 1 \\
 1& 0 
 \end{matrix}\right)$. 

The matrix  $A(t,v)$ of $\Ad(t,v,z)$ with $v=(x,y)$  is of the form (\ref{isomg1}) with $a_{00}=1$, $w=(-x,y)$ and $\tilde{A}=R_1(t)$.
\end{proof}
\begin{rem} For $G_0$  compare with \cite{Bu}. \end{rem}


\begin{thebibliography}{GGGG}

\bibitem{ABL} { M. Aitbenhaddou, M. Boucetta, H. Lebzioui}, {\it Left-invariant Lorentzian
 flat metrics on Lie groups}, J. Lie Theory {\bf 22} (1) (2012), 269--289. (arXiv:1103.0650v1 (2011)). 
          
         
 \bibitem{Bo} { M. Boucetta}, {\it Ricci flat left invariant Lorentzian metrics on 2-step nilpotent Lie groups},  arXiv:0910.2563 (2009).

\bibitem{Bu} { F. Bourseau}, {\it Die Isometrien der Oszillatorgruppe und einige Ergebnisse \"uber Pr\"amorphismen Liescher Algebren}, Diplomarbeit, Fak. der Math., Univ. Bielefeld (1989).

 \bibitem{BK} { H. Baum, I. Kath}, {\it Doubly extended Lie groups -- curvature, holonomy and parallel spinors},  
Differ. Geom. Appl. {\bf 19} (3) (2003), 253--280.

\bibitem{BO} { W. Batat and K. Onda}, {\it Algebraic Ricci Solitons of three-dimensional
Lorentzian Lie groups}, arxiv 1112.2455v2 (2012).

\bibitem{BR} { W. Batat, and S. Rahmani}, {\it Isometries, Geodesics and Jacobi Fields
of Lorentzian Heisenberg Group}, Mediterr. J. Math. {\bf 8} (2011), 411–-430.

\bibitem{Ca} {  G. Calvaruso}, {\it Homogeneous structures on three dimensional Lorentzian Lie manifolds}, J. Geom. Phys. {\bf 57} (2007), 1279--1291.

\bibitem{CM1} { G. Calvaruso, R. A. Marinosci}, {\it Homogeneous geodesics of three dimensional unimodular Lorentzian Lie groups},  Mediterr. J. Math. {\bf 3} (2006), 467--481.

\bibitem{CM2} { G. Calvaruso, R. A. Marinosci}, {\it Homogeneous geodesics of non unimodular Lorentzian
Lie groups and naturally Lorentzian spaces in dimension three}, Adv. Geom. {\bf 8} (2008),
473--489.

\bibitem{CP} { L. Cordero, P. Parker}, {\it Isometry groups of pseudoriemannian 2-step nilpotent Lie groups}, Houston J. Math. {\bf 35} (1) (2009), 49 - 72.

\bibitem{Du} { Z. Dusek}, {\it Survey on homogeneous geodesics}, Note Mat. {\bf 1} 
(suppl. no. 1) (2008), 147--168.

\bibitem{FS} { G. Favre, L. Santharoubane}, {\it Symmetric, invariant, non-degenerate bilinear form on a Lie algebra},  J. of Algebra, {\bf 105}(1987),  451--464. 

\bibitem{FMP} {  J. Figueroa O'Farrill, P. Meessen, S. Philip}, {\it Supersymmetry and homogeneity of M-theory backgrounds}, Class. Quant. Grav. {\bf 22} (1) (2005), 207--226.

\bibitem{Ge0}{ M. Guediri}, {\it Sur la compl\'etude des pseudo-m\'etriques invariantes \`a 
gauche sur les groupes de Lie nilpotents}, Rend. Sem. Mat. Univ. Pol. Torino {\bf 52} (1994), 
371--376.

\bibitem{Ge} { M. Guediri}, {\it On the nonexistence of closed timelike
geodesics in flat Lorentz 2-step nilmanifolds}, Trans. AMS {\bf 355} (2) (2003), 775--786.

\bibitem{Mu} { D. M\"uller}, {\it Isometries of bi-invariant pseudo-Riemannian metrics 
on Lie groups}.  Geom. Dedicata  {\bf 29} (1)  (1989), 65--96. 

\bibitem{MR} { A. Medina, P. Revoy}, {\it Alg\`ebres de Lie et produit scalaire invariant} (French) [Lie algebras and invariant scalar products], Ann. Sci. \'Ecole Norm. Sup. (4) {\bf 18} (3) (1985), 553--561.

\bibitem{Me} { P. Meessen}, {\it Homogeneous Lorentzian spaces whose null-geodesics are
 canonically homogeneous}, Lett. Math. Phys. {\bf 75} (2006), 209--212.

\bibitem{No} { K. Nomizu}, {\it Left-invariant Lorentz metrics on Lie groups}, Osaka J. Math {\bf 16} (1) (1979), 143--150.
\bibitem{ON} { B. O'Neill}, {\it Semi-Riemannian geometry with applications to relativity}, Academic Press (1983).

\bibitem{Ov1} { G. Ovando}, {\it Naturally reductive pseudo-Riemannian  spaces}, J. Geom. Phys. {\bf 61} (2011), 157--171.

\bibitem{Ov2} { G. Ovando}, {\it Naturally reductive pseudo Riemannian 2-step nilpotent Lie groups}, to appear in Houston J. Math., (see	arXiv:0911.4067).

\bibitem{Ov3} { G. Ovando}, {\it Examples of naturally reductive pseudo-Riemannian Lie groups}, AIP Conference Proc.
{\bf 1360} (2011), 157--163.

\bibitem{Ra} { S.  Rahmani}, {\it  M\'etriques de Lorentz sur les groupes de Lie unimodulaires de dimension 3}, J. Geom. Phys. {\bf 9} (1992), 295--302.

\bibitem{RR} { N. Rahmani, S. Rahmani}, {\it Lorentzian Geometry of the Heisenberg Group}, Geom. Dedicata {\bf 118}(2006), 133--140.
\end{thebibliography}
\end{document}